\newcommand{\git}{\mathbin{
  \mathchoice{/\mkern-6mu/}
    {/\mkern-6mu/}
    {/\mkern-5mu/}
    {/\mkern-5mu/}}}
\numberwithin{equation}{section}
\newtheorem{proposition}{Proposition}[section]
\newtheorem{lemma}[proposition]{Lemma}
\newtheorem{theorem}[proposition]{Theorem}
\theoremstyle{definition}
\newtheorem{definition}[proposition]{Definition}
\newtheorem{example}[proposition]{Example}
\DeclareMathOperator{\Bl}{Bl}
\DeclareMathOperator{\GL}{GL}
\DeclareMathOperator{\Proj}{Proj}
\DeclareMathOperator{\Spec}{Spec}
\DeclareMathOperator{\Coh}{Coh}
\newcommand{\C}{\mathbb{C}}
\newcommand{\Q}{\mathbb{Q}}
\newcommand{\pr}{\mathbb{P}}
\renewcommand{\epsilon}{\varepsilon}
\newcommand{\M}{\mathcal{M}}
\newcommand{\scA}{\mathcal{A}}
\newcommand{\scO}{\mathcal{O}}
\newcommand{\mfk}{\mathfrak{k}}
\renewcommand{\phi}{\varphi}
\newcommand\an{^{\mathrm{an}}}
\newcommand\mi{^{-1}}
\newcommand\bbc{\mathbb{C}}
\newcommand\cC{\mathcal{C}}
\newcommand\cO{\mathcal{O}}
\title[The birational geometry of GIT quotients]{The birational geometry of GIT quotients}
\author[Ruadha\'i Dervan and R\'emi Reboulet]{Ruadha\'i Dervan and R\'emi Reboulet}
\address{Ruadha\'i Dervan, School of Mathematics and Statistics, University of Glasgow, University Place, Glasgow G12 8QQ, United Kingdom}\email{ruadhai.dervan@glasgow.ac.uk}
\address{R\'emi Reboulet, Institut Camille Jordan, Université Claude Bernard Lyon 1, 43 Bd du 11 Novembre 1918, 69100 Villeurbanne, France}\email{reboulet@math.univ-lyon1.fr, rebouletremimath@gmail.com}
\begin{document}

\begin{abstract}Geometric Invariant Theory (GIT) produces quotients of algebraic varieties by reductive groups. If the variety is projective, this quotient depends on a choice of polarisation; by work of Dolgachev--Hu and Thaddeus, it is known that two quotients of the same variety using different polarisations are related by birational transformations. Only finitely many birational varieties arise in this way: variation of GIT fails to capture the entirety of the birational geometry of  GIT quotients. We construct a space parametrising all possible GIT quotients of all birational models of the variety in a simple and natural way, which captures the entirety of the birational geometry of GIT quotients in a precise sense. It yields in particular a compactification of a birational analogue of the set of stable orbits of the variety.
\end{abstract}

\maketitle

\section{Introduction}

Let $X$ be a complex projective variety with an action of a reductive group $G$. Given an ample, $G$-linearised  line bundle $L$ on $X$ (namely a polarisation), Geometric Invariant Theory (GIT), introduced by Mumford \cite{gitbook}, yields a projective quotient variety $X\sslash_L G$ parametrising polystable orbits in $X$.

The construction of the GIT quotient relies heavily on the polarisation $L$, and it is natural to investigate what happens when the polarisation $L$ varies. Fundamental work of Dolgachev--Hu \cite{vgit1} and Thaddeus \cite{vgit2} on variation of GIT (VGIT) shows that all GIT quotients of $X$ by $G$ are birational (under the assumption of the existence of a stable point). This structure induces a finite wall-and-chamber structure on the $G$-ample cone of $X$, where all quotients with respect to polarisations in a given chamber are isomorphic; walls divide the chambers, and when crossing a wall the GIT quotients undergo certain birational transformations which are now called Thaddeus flips.

GIT is  used both to construct moduli spaces in algebraic geometry and as an analogy to understand the structure of algebro-geometric moduli spaces constructed using other techniques. The analogue of VGIT in moduli theory studies the birational geometry of moduli spaces, notably by varying stability conditions, and this is by now a prominent topic in algebraic geometry. The theory of VGIT by nature produces only finitely many possible birational models of the GIT quotient, hence cannot in any sense fully capture the birational geometry of GIT quotients. This deficiency is what we address in the current work.

Our approach  borrows from the theory of Zariski--Riemann spaces. Given $X$ as above, its Zariski--Riemann space is the projective limit $$\underline{X}:=\varprojlim \,(\pi_Y:Y\to X)$$
taken over the set of all birational morphisms $\pi_Y:Y\to X$, ordered by dictating that $\pi_Y:Y\to X$ is smaller than $\pi_Z:Z\to X$ if there exists a birational morphism $\nu:Z\to Y$ such that $\pi_Z$ factors through $\pi_Y$. The space $\underline{X}$, viewed naturally as a locally ringed space, then completely determines the birational geometry of $X$: birational varieties have isomorphic Zariski--Riemann spaces tautologically, while conversely the isomorphism type of  $\underline X$ as a locally ringed space determines the birational equivalence class of $X$, as explained in Section \ref{RZspaces}.

Zariski--Riemann spaces were originally introduced by Zariski \cite{zariskisurf} in his study of resolution of singularities for surfaces; to this day, they see use in various areas of complex geometry, for example in the minimal model programme \cite{shokurovflips}, complex dynamics \cite{dangfavrespectral}, K-stability \cite{bigding, trusiani-ytd} and non-Archimedean pluripotential theory \cite{bjtrivval}. We refer to Vaqui\'e for an introduction to Zariski--Riemann spaces and related ideas \cite{vaquie}.

In our setting, we rather consider as objects the set of \textit{GIT quotients over $X$}, i.e. GIT quotients
$$Y\sslash_{L_Y}G,$$
where $Y$ admits a $G$-equivariant birational morphism to $X$, and $L_Y$ is an ample, $G$-linearised line bundle on $Y$ admitting a stable point (the latter assumption is a running assumption in most papers on GIT, for example \cite{kirwandesing}). Our first main result shows that all such quotients fit naturally into a projective system whose limit is isomorphic to the Zariski--Riemann space of any  GIT quotient of any birational model of $X$ (in particular, of $X$ itself).

\bigskip\noindent\textbf{Theorem A.} GIT quotients over $X$ are mutually birational, and form a projective system. The projective limit
$$V^G\underline{X}:=\varprojlim (Y\sslash_{L_Y} G)$$
is isomorphic, as a locally ringed space, to the Zariski--Riemann space $\underline{Y\sslash_{L_Y} G}$ of any GIT quotient over $X$.

\bigskip The space $V^G\underline{X}$ can therefore be thought of as a \textit{universal VGIT space} for $X$ and $G$. The second half of this result implies that $V^G\underline{X}$ \emph{completely determines} the birational geometry of GIT quotients of (and over) $X$, as we had promised. We highlight the difference in our approach in comparison to VGIT: in VGIT, one varies only the line bundle, producing \emph{finitely many} birational models of $X\sslash_{L}G$. We instead also allow $X$ itself to vary birationally, and by allowing all GIT quotients of all equivariant birational models, we obtain \emph{all possible} birational information of the GIT quotient $X\sslash_{L}G$. 

Once the dimension of $X\sslash_{L}G$ is at least two, the projective limit $V^G\underline{X}$ is not a scheme: by Theorem A, it is isomorphic to the Zariski--Riemann space $\underline{Y\sslash_{L_Y} G}$ of any GIT quotient $Y\sslash_{L_Y} G$ over $X$, which is itself not a scheme in dimension at least two \cite[Corollary 5.3]{olberding}. In particular, $V^G\underline{X}$ cannot be isomorphic to any specific GIT quotient of $X$. As a classical example, if one considers the moduli space of hypersurfaces of degree $d$ in projective space $\pr^n$ given by the GIT quotient $\pr(H^0(\pr^n,\scO(d)))\git_{\scO(1)}\GL(n+1)$, this quotient has dimension at least two once $n=2$ and $d\geq 4$ or $n\geq 3$ and $d\geq 3$, as follows from \cite[Corollary 5.30]{mukai}. Thus the locally ringed space produced by Theorem A is not a scheme for these moduli spaces, involving infinitely many birational models, whereas in general VGIT involves only finitely many. Conversely, in any situation in which $X\sslash_{L}G$ has dimension one, the projective system is trivial (there being no nontrivial birational morphisms) and the projective limit is hence a variety.

By construction,  $V^G\underline{X}$ admits canonical surjective morphisms to all possible GIT quotients of all $G$-equivariant birational models of $X$. In the case $G$ is trivial, we recover the Zariski--Riemann space of $X$ from our projective limit; in general, our construction can be thought of as producing a $G$-equivariant analogue of the Zariski--Riemann space of $X$. Much as with Zariski--Riemann spaces themselves, describing $V^G\underline{X}$ explicitly is quite challenging: this is already the case for the Zariski--Riemann space of $\pr^n$.

We emphasise here that our notion of a projective system requires the system to be \emph{filtered}: for any two GIT quotients over $X$, we construct a third which admits birational morphisms to both. This property is what forces us to consider birational models of $X$, and is  one of the key new aspects of our approach. We rely on important technical results of Kirwan \cite{kirwandesing} and  generalisations thereof due to Reichstein \cite{reichstein} to establish this property. By contrast with a \emph{weaker} definition of a projective system (dropping the filtered requirement) one can form a system of GIT quotients of a fixed variety by the theory of VGIT, and can hence take the limit of the resulting system, sometimes called the \emph{limit quotient} of $X$; the limit quotient may be reducible. In fact by a beautiful result of Hu \cite[Theorem 3.8]{yihuchow}, one irreducible component of the limit quotient is homeomorphic (in the analytic topology) to the \emph{Chow quotient} of $X$, introduced by Kapranov \cite{kapranov, ksz}. Since in Hu's construction the projective limit is taken over a finite system, the end result is that the limit quotient is actually a (possibly reducible) \emph{variety}, whereas by contrast we merely obtain a \emph{locally ringed space}. The advantage of our construction is that by allowing birational models of $X$ as well, we obtain a space which completely determines the birational geometry of GIT quotients of $X$, at the expense of leaving the category of varieties. By general theory, we note that $V^G\underline{X}$  also admits  canonical surjective morphisms to all limit quotients of all birational models of $X$.

Our original motivation for investigating the birational behaviour of GIT quotient was to understand structures of moduli spaces parametrising (suitably stable) projective varieties with big line bundles (following \cite{bigding}), where we expect such moduli spaces to have similar Zariski--Riemann type structures. In particular Theorem A is a model analogue of the sort of structure that we expect such moduli spaces to admit, and we hope to come back to this in future work.

\bigskip

GIT quotients are frequently described as compactifications of the space of stable orbits, and it is desirable to have an analogous description of $V^G\underline{X}$. In order to produce such a description, we next analyse the set of $\bbc$-points of the space $V^G\underline{X}$. A way to capture such points is to take its analytification, in the sense of Serre (in the category of projective limits of varieties), i.e. to consider the projective limit
$$V^G\underline{X}\an:=\varprojlim\,(Y\sslash_{L_Y}G)\an,$$ which by the general theory of GIT and projective systems is a compact, Hausdorff topological space.
We show that it is a compactification the set of \textit{infinitesimally stable points} over $X$, which are points $\underline x\in V^G\underline{X}\an$ such that their realisations on GIT quotients  $Y\sslash_{L_Y}G$ over $X$ are stable for a ``sufficiently large'' collection of GIT quotients over $X$; technically we ask that the set of such GIT quotients over $X$ is cofinal in the projective system. The name is chosen by analogy with the perspective of viewing blow-ups as adding ``infinitesimal directions'' and the fact that birational morphisms are blow-ups.

\bigskip\noindent\textbf{Theorem B.} The set of infinitesimally stable points is an open, dense subset of the compact space $V^G\underline{X}\an$. 

\bigskip Loosely, any stable orbits that could potentially be missing (in a birational sense) from the usual quotient $X\sslash_L G$ because they are ``hidden'' in an infinitesimal direction are captured by the universal VGIT quotient $V^G\underline{X}$.

\subsection*{Wall-crossing} The main result of VGIT is that the wall-and-chamber decomposition of the $G$-ample cone of a projective variety is actually finite (see Ressayre for refinements \cite{ressayre}). The $G$-N\'eron--Severi groups (tensored with $\Q$, say) of $G$-equivariant birational models of $X$ form a projective system through pushing-forward, and hence one obtains a vector space given by taking the projective limit of these vector spaces (elements of this projective system are called $b$-divisors  \cite{shokurovflips}). With the projective limit topology (also called the initial topology), however, there is no hope to obtain a locally finite wall-and-chamber decomposition,  for the following reason (assuming the dimension of the GIT quotients is at least two, so that its birational geometry is nontrivial). 

Given an ample $G$-linearised line bundle  $L$ on $X$, take an open set $U$ inside the projective limit of $G$-N\'eron--Severi groups with respect to the projective limit topology. For \emph{any} $G$-equivariant model $\pi_Y: Y \to X$, for $E$ relatively ample and $G$-linearised the line bundle $L-tE$ is both an ample $G$-linearised line bundle and inside $U$ for $t$ sufficiently small. Thus from the techniques used to prove Theorem A, there will be infinitely many birational models in any neighbourhood of $L$. In other uses of Zariski--Riemann spaces---notably the work of Dang--Favre \cite{dangfavrespectral} on complex dynamics---other topologies are introduced, and it would be interesting to see if there is a natural topology for which one \emph{can} obtain a locally finite wall-and-chamber decomposition. This would follow if a basis for the topology could be taken to be given by line bundles which lie on a fixed birational model of $X$;  this, however, is an extremely unnatural condition for a topology on this projective limit to satisfy and it is unlikely that any interesting topology actually satisfies this property.

\subsection*{Analogues in moduli theory} As already mentioned, these days GIT is used as much as a motivational philosophy as as a practical tool to construct moduli spaces. Perhaps most prominent of the analogues of VGIT is the theory of Bridgeland stability conditions on triangulated categories \cite{TB}. The analogue of our results in this setting would be roughly as follows; we do not aim to make any precise claims, but instead just loosely describe possible analogues in this setting, focusing on the case that the triangulated category is the (bounded) derived category $\mathcal D^b\Coh(Y)$ of coherent sheaves on a projective variety $Y$, which is the most important one in algebraic geometry.

A stability condition on $\mathcal D^b\Coh(Y)$ requires in particular a choice of abelian subcategory $\scA\subset \mathcal D^b\Coh(Y)$ and a central charge $Z$ on $\scA$, which determines a notion of semistability, polystability and stability for objects in $\scA$. One should expect to obtain a moduli space $\M_{\scA,Z}$ of polystable objects in $\scA$ with respect to the central charge $Z$ (and one frequently does---see for example \cite{toda}). In particular one should think of $\scA$---which is chosen to lie in the triangulated category  $\mathcal D^b\Coh(Y)$ itself---as the analogue of the space $X$ one is taking a  GIT quotient of, and $Z$ as a choice of ample $G$-linearised line bundle; in particular, it is frequently emphasised that stability conditions are best thought of as polarisations on triangulated categories, in that they determine moduli spaces (though perhaps a K\"ahler class rather than a polarisation is a more accurate analogy). The stability manifold of a fixed triangulated category then admits a locally finite wall-and-chamber decomposition, as an analogue of the fundamental results of VGIT (local finiteness in this setting requires a strengthening of the original definition \cite{KS}).

Thus from our perspective, while one cannot expect the birational geometry of $\M_{\scA,Z}$ to be completely determined by varying the stability condition on $\mathcal D^b\Coh(Y)$, one might expect to be able to do so by ``birationally'' varying the analogue of $X$: the triangulated category itself. Enticingly, in this situation there are natural geometric variations of the triangulated category---by considering the derived category of coherent sheaves of its birational models---and so one can ask if the birational geometry of $\M_{\scA,Z}$ is completely determined by moduli spaces of polystable objects in $\mathcal D^b\Coh(W)$ with respect to all possible stability conditions and all possible birational models $W$ of $Y$. This is likely rather too optimistic a suggestion for various reasons, and should not be taken too literally, but would be an analogue of our Theorem A in this setting and would fit in well with the vast literature on the interplay between birational geometry and derived categories of projective varieties (an early reference for which is Bondal--Orlov \cite{BO}).

\subsection*{Symplectic reduction} A foundational result in GIT due to Kempf--Ness and Kirwan relates GIT to symplectic geometry \cite{kempf-ness, kirwan-thesis}, by showing that the (analytification of the) GIT quotient of $X$ with respect to $L$ is homeomorphic to the symplectic quotient of $X\an$ by a maximal compact subgroup $K\subset G$ with respect to a $K$-invariant K\"ahler metric $\omega \in c_1(L)$; denoting by $\mu_L: X\an \to \mfk^*$ the associated moment map, there is a natural homeomorphism $$(X \sslash_L G){\an} \cong \mu_L\mi(0)/K,$$where we have taken the analytification and hence have endowed $(X \sslash_L G){\an}$ with the complex topology. \emph{A priori} there no is projective system with elements of the form $\mu_{L_Y}^{-1}(0)$ ranging over $G$-equivariant birational models $(Y,L_Y)$ of $X$ along with associated moment maps (similarly semistable loci on birational models should not be expected to form a projective system), making it essentially impossible to phrase an analogue of this result in our setting. However, trivially by Theorem A the symplectic quotients $ \mu_{L_Y}\mi(0)/K$ \emph{do} form a projective system with the same hypotheses assumed there, producing by the general theory of projective systems a homeomorphism $$V^G\underline{X}{\an}=\varprojlim (Y\sslash_{L_Y} G)\an \cong \varprojlim \mu_{L_Y}\mi(0)/K,$$ which seems to be the best that one can hope for.

\bigskip\noindent\textbf{Organisation of the paper:} 
In Section \ref{sect_preliminaries}, we recall the basics of GIT, and collect various results concerning projective limits of topological spaces and Zariski--Riemann spaces. Section \ref{sect_main} then constructs the universal VGIT space and is where we prove Theorem A, which is split into Proposition \ref{prop_allbir}, Theorem \ref{thm_uvgit}, and Theorem \ref{thm_zr}. We end in Section \ref{sect_compact} by defining infinitesimally stable points and proving the compactification result Theorem B, as Theorem \ref{thm_compact}.

\bigskip\noindent\textbf{Notation:} We work over an algebraically closed field of characteristic zero; we choose $\C$ for aesthetic reasons.  A compact topological space is one that is both  quasicompact and Hausdorff. A variety is an integral scheme that is separated and finite type over $\C$. We thus use the Zariski topology, and points will therefore mean scheme-theoretic points. The exception to this is the material on analytifications, where we use the complex topology and where this change-of-topology will be made clear; a $\C$-point in $X$ thus corresponds precisely to a point  in $X\an$.

\bigskip\noindent\textbf{Acknowledgements:} We thank Frances Kirwan for a discussion that strongly influenced the direction of our work, and in addition thank S\'ebastien Boucksom for a helpful conversation which prompted us to consider cofinality of GIT quotients in the projective system defining the Zariski--Riemann space. We also thank the referee for their helpful comments. RD was funded by a Royal Society University Research Fellowship, while RR was supported by a grant from the Knut and Alice Wallenberg foundation.

\section{Preliminaries.}\label{sect_preliminaries}

\subsection{Geometric invariant theory.}\label{sec:GIT}  We begin by recalling the basic results of GIT, for which the book \cite{gitbook} is the foundational reference. A more introductory account is given by Hoskins \cite{hoskins}. Proofs of all of the following claims can be found in these references (though the terminology of \cite{gitbook} is now non-standard and disagrees with what is used below).

We consider a projective variety $X$ along with an ample line bundle $L$; in addition we consider a reductive group $G$ which acts on $(X,L)$.

\begin{definition} The \emph{GIT quotient} of $(X,L)$ by $G$ is defined to be $$X\sslash_L G = \Proj \oplus_{k\geq 0} H^0(X,kL)^G,$$ where $H^0(X,kL)^G$ denotes $G$-invariant global sections. \end{definition}

The general theory of GIT implies that $X\sslash_L G$ is also a projective variety; in particular the graded ring $\oplus_{k\geq 0} H^0(X,kL)^G$ is finitely generated, and integrity and separatedness are preserved under GIT quotients. The geometric meaning of the GIT quotient is elucidated through various notions of \emph{stability} of orbits.

\begin{definition}
We say that a $\C$-point $p \in X$ (or equally the orbit $G.x$) is
\begin{enumerate}[(i)]
\item \emph{semistable} if there is a section $s\in H^0(X,kL)^G$ for some $k>0$ such that $s(x)\neq 0$;
\item \emph{polystable} if the orbit $G.x$ is closed in the semistable locus $X^{ss}$;
\item \emph{stable} if $x$ is polystable and the stabiliser $G.x$ is finite;
\item \emph{unstable} otherwise.
\end{enumerate} 
\end{definition}

Clearly stability implies polystability which in turn implies semistability. In the above, it is also straightforward to see that the semistable locus is Zariski open in $X$, since the condition that $s\in H^0(X,L^k)^G$ is non-vanishing at a point is a Zariski open condition. The same is true of the stable locus:

\begin{lemma} Both the semistable locus $X^{ss}$ and the stable locus $X^s$ are Zariski open subsets of $X$. \end{lemma}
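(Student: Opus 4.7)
The openness of $X^{ss}$ is immediate from the definition: by construction
$$X^{ss} = \bigcup_{k > 0} \bigcup_{s \in H^0(X, kL)^G} X_s,$$
where $X_s = \{x \in X : s(x) \neq 0\}$. Each $X_s$ is Zariski open as the non-vanishing locus of a section (as already noted in the excerpt), so $X^{ss}$ is open. The interesting content of the lemma is openness of $X^s$.

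For the stable locus my plan is to exhibit $X^s$ as a union of Zariski opens indexed by suitable $G$-invariant sections. I will appeal to the standard characterisation (e.g.\ Mumford \cite{gitbook}, Theorem 1.10) that
$$X^s = \bigcup_s X_s,$$
where the union ranges over $G$-invariant sections $s \in H^0(X, kL)^G$ (for some $k > 0$) such that $X_s$ is affine, $G$ acts on $X_s$ with closed orbits, and the stabiliser of every point of $X_s$ is finite. Granting this characterisation, each such $X_s$ is a Zariski open subset of $X$, and hence their union $X^s$ is open.

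The characterisation itself rests on two ingredients. The inclusion $\bigcup_s X_s \subseteq X^s$ is straightforward: for $y \in X_s$ with $s$ as above, $y$ is semistable (witnessed by $s$), the orbit $G \cdot y$ is closed in $X_s$, hence closed in $X^{ss}$ by $G$-saturation of $X_s$ in $X^{ss}$, and the stabiliser is finite by hypothesis. For the reverse inclusion one takes a stable $x \in X$ and produces an invariant section $s$ with $s(x) \neq 0$ such that $X_s$ is affine and the closed-orbit/finite-stabiliser condition holds throughout $X_s$. Finiteness of stabilisers in a neighbourhood of $x$ is automatic: the map $G \times X \to X \times X$, $(g, x) \mapsto (x, gx)$, has fibre $\Stab_G(x)$ over $(x,x)$, and upper semicontinuity of fibre dimension makes $\{x : \dim \Stab_G(x) = 0\}$ Zariski open. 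The closed-orbit condition is the delicate part: because $G$ is reductive and $G \cdot x$ is closed in $X^{ss}$, invariant sections of suitable powers of $L$ separate $G \cdot x$ from any other closed orbit meeting a given affine open, and one can shrink $X_s$ until no ``bad'' orbit remains.

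The main obstacle is this last construction of separating invariant sections; this is where reductivity of $G$ enters essentially (via Reynolds-type averaging to produce invariants) together with the affineness of the $X_s$ and the resulting good quotient $X_s \to X_s \sslash G$ (which separates distinct closed orbits). The argument is standard and I would cite it from \cite{gitbook} or \cite{hoskins} rather than reproduce it.
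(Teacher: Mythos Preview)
Your proposal is correct and follows the standard argument. Note, however, that the paper does not actually prove this lemma: it is stated in the preliminaries section as a basic fact of GIT, with the blanket remark that ``proofs of all of the following claims can be found in these references'' (namely \cite{gitbook} and \cite{hoskins}). Your sketch is essentially the proof one finds in those references (in particular Mumford's Theorem~1.10, which you cite), so there is no meaningful difference in approach to compare.
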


In principle both the semistable and the stable loci could be empty; when the semistable locus is empty the GIT quotient is patently trivial. When the semistable locus is \emph{not} empty there is a morphism $$X^{ss}\to X\sslash_L G;$$ beyond this one only obtains a rational map $X\dashrightarrow X\sslash_L G$ which is undefined on the complement of the semistable locus (i.e. on the unstable locus). Again by the general theory of GIT the closure of each semistable orbit contains a (unique) polystable orbit, so the existence of a semistable orbit implies the existence of a polystable orbit. The condition that the stable locus not vanish---which is a running assumption throughout the present work---is thus a non-degeneracy assumption which is essentially standard in GIT (assumed for example in \cite{kirwandesing}). The fact that the closure of each semistable contains a \emph{unique} polystable orbit implies the following:

\begin{lemma} The $\C$-points of the  GIT quotient $X\sslash_L G$ are in bijection with polystable orbits.\end{lemma}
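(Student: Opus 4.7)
The plan is to decompose the statement into the existence of some polystable orbit in each fibre of the quotient map, and the uniqueness of such an orbit. I would first invoke the morphism $\pi:X^{ss}\to X\sslash_L G$ recalled just above the statement, which is known from the general theory of GIT to be surjective on $\C$-points. Thus every $\C$-point $q\in X\sslash_L G$ has nonempty preimage $\pi\mi(q)\subset X^{ss}$, and it suffices to show this preimage contains a unique polystable orbit.

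Next I would identify the fibres of $\pi$ in orbit-theoretic terms: two semistable $\C$-points $x,y$ satisfy $\pi(x)=\pi(y)$ if and only if every $G$-invariant section of $kL$ ($k>0$) takes the same value on them (up to the $\Proj$ identifications), which is equivalent to $\overline{G.x}\cap \overline{G.y}\cap X^{ss}\neq \emptyset$. This uses the standard description of $\Proj$-points as graded homomorphisms, combined with the fact that $G$-invariant sections of tensor powers of $L$ separate disjoint $G$-invariant closed subsets of $X^{ss}$ (the key analytic input coming from reductivity of $G$, via geometric reductivity). From this description the fibre of $\pi$ through a semistable $x$ is precisely the set of semistable $y$ whose orbit closure meets $\overline{G.x}$ in $X^{ss}$, which is a $G$-invariant closed subset of $X^{ss}$.

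For existence of a polystable orbit in each fibre, I would apply the standard dimension argument: within a $G$-invariant closed subset of $X^{ss}$, an orbit of minimal dimension is necessarily closed in $X^{ss}$ (using Chevalley's theorem that $G.x$ is open in $\overline{G.x}$, so that $\overline{G.x}\setminus G.x$ has strictly smaller dimension). This closed orbit is polystable by definition, with finite stabiliser issue handled by the fact that we are only claiming polystability, not stability. For uniqueness, suppose there were two distinct closed orbits $G.y_1,G.y_2\subset \pi\mi(q)\subset X^{ss}$; they would be disjoint and $G$-invariant closed, hence separated by an invariant section $s\in H^0(X,kL)^G$ for some $k>0$, contradicting $\pi(y_1)=\pi(y_2)=q$.

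The main obstacle, and the place where reductivity of $G$ is essential, is the separation step: showing that disjoint $G$-invariant closed subsets of $X^{ss}$ can be separated by $G$-invariant sections of a power of $L$. This is where one cites (geometric) reductivity, ensuring that the invariants behave well enough to characterise the $\Proj$ construction on closed points, and it is the only genuinely nontrivial ingredient---the rest is formal manipulation of the quotient map together with the Chevalley dimension argument.
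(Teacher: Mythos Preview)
Your argument is the standard textbook proof and is correct. Note, however, that the paper does not actually prove this lemma: it appears in the preliminaries section with the blanket remark that ``proofs of all of the following claims can be found in these references'' (namely \cite{gitbook} and \cite{hoskins}), and the only justification given in the surrounding text is the sentence ``the closure of each semistable orbit contains a (unique) polystable orbit''. Your proposal unpacks exactly this assertion---existence via the minimal-dimension orbit argument and uniqueness via separation of disjoint $G$-invariant closed sets by invariant sections---which is precisely what one finds in those references. So there is no discrepancy to report: you have supplied the proof the paper chose to cite rather than reproduce.
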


Thus GIT quotients parametrise polystable orbits. This result is sometimes phrased in terms of $S$-equivalence, namely it is often (equivalently) stated that GIT quotients parametrise semistable orbits up to $S$-equivalence, where two semistable orbits are $S$\emph{-equivalent} if their closures intersect (thus their associated polystable orbits agree and hence they induce the same point in the GIT quotient). On the stable locus $X^s$, the  GIT quotient  $X^s\to X^s\sslash_L G$ is actually a \emph{geometric} quotient, which in particular implies that the fibres of the quotient morphism are $G$-orbits.

Locally, GIT quotients can be constructed as follows. Let $x \in X$ be a semistable point, with $s(x) \neq 0$ for some $s\in H^0(X,kL)^G$. Then since $L$ is ample, the complement $X - V(s)$ is affine. 

\begin{lemma}\label{zariskiopenaffine} The affine GIT quotient $$(X - V(s))\sslash G := \Spec( \C[X - V(s)]^G)$$ is an affine chart of $X\sslash_L G$, and is in particular Zariski dense if it is nonempty (for example when $x$ is stable).

\end{lemma}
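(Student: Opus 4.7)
The plan is to realise $(X - V(s))\sslash G$ as the standard basic affine open $D_+(s) = \Spec R_{(s)}$ inside $\Proj R$, where $R = \bigoplus_{k \ge 0} H^0(X, kL)^G$ and $R_{(s)}$ denotes the degree zero part of the homogeneous localisation $R_s$. That the $D_+(f)$ for $f$ homogeneous of positive degree form an affine cover of a $\Proj$ is a standard scheme-theoretic fact, which I would simply quote.

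The nontrivial step is then to identify $R_{(s)}$ with $\bbc[X - V(s)]^G$ as $\bbc$-algebras, via the tautological map sending $t/s^m$ to the regular function it represents on $X - V(s)$. For surjectivity, I would use that, since $L$ is ample and $s \in H^0(X, kL)$, every regular function on the affine open $X - V(s)$ has the form $t/s^m$ for some $m \ge 0$ and some $t \in H^0(X, mkL)$; if this function is moreover $G$-invariant, the Reynolds operator associated to the reductive group $G$ produces a $G$-invariant section $t'$ in the same graded piece such that $t'/s^m = t/s^m$ (because $s^m$ is already $G$-invariant). Injectivity reduces to integrity of $X - V(s)$. This identification yields the first assertion, that $(X - V(s))\sslash G$ is an affine chart of $X\sslash_L G$.

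For the density statement, the full section ring $\bigoplus_k H^0(X, kL)$ is an integral domain because $X$ is integral and $L$ is a line bundle, so its subring $R$ is also a domain, whence $X \sslash_L G = \Proj R$ is irreducible. Any nonempty Zariski open subset of an irreducible scheme is dense, so it suffices to exhibit a $\bbc$-point of the chart $(X - V(s))\sslash G$ under the stability assumption. When $x$ is stable, $x \notin V(s)$, and the image of $x$ under the quotient $X^{ss} \to X \sslash_L G$ is a $\bbc$-point lying in $(X - V(s))\sslash G$, giving nonemptiness, and hence density.

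The main technical obstacle is the surjectivity half of the identification $R_{(s)} \cong \bbc[X - V(s)]^G$: it is precisely here that reductivity of $G$ enters, via the Reynolds operator that averages arbitrary sections into $G$-invariant ones while preserving the ratio with $s^m$. Everything else is routine scheme theory combined with the integrity of $X$.
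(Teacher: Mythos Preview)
The paper does not actually prove this lemma: it is stated in the preliminaries section with the blanket remark that proofs of all such claims can be found in \cite{gitbook} and \cite{hoskins}. Your argument is the standard one and is correct; it is essentially what one finds in those references, so there is nothing to compare.

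Two small remarks. First, the Reynolds operator is not needed for the surjectivity step. Since $X$ is integral, the full section ring $\bigoplus_k H^0(X,kL)$ is a domain, and in a domain localisation at a $G$-invariant element commutes with taking $G$-invariants on the nose: if $t/s^m$ is $G$-invariant and $s^m$ is $G$-invariant, then $g\cdot t = t$ by cancellation, so $t$ was already invariant. Reductivity is of course still needed for $R$ to be finitely generated, but not at this particular point. Second, nonemptiness does not actually require $x$ to be stable: in the setup preceding the lemma one already has $s(x)\neq 0$, so $s\neq 0$ in the domain $R$ and $D_+(s)$ is nonempty. The paper singles out the stable case only because that is the hypothesis used when the lemma is applied in Proposition~\ref{prop_allbir}.
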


Note that affine GIT quotients are independent of polarisation.

\subsection{Projective limits}
We move on to projective limits, and first simply collect a few definitions and facts. 

\begin{definition}Let $\cC$ be a category. We define a \textit{projective system} of objects in $\cC$ to be the data of:
\begin{enumerate}[(i)]
\item a (right-)filtered poset $(I,\geq)$, i.e. a poset such that, for each tuple $i,j\in I$, there exists $k\in I$ with $k\geq i$ and $k\geq j$;
\item for each $i\in I$, an object $X_i$ of $\cC$;
\item for each ordered tuple $i\leq j\in I$, a morphism $f_{ij}:X_j\to X_i$ in $\cC$, such that if $i\leq j \leq k$, then $f_{ik}=f_{ij}\circ f_{jk}$.
\end{enumerate}
Note that require our projective systems be \textit{filtered}, which is nonstandard.
\end{definition}

\begin{definition}We define, as a set, the \textit{projective limit} of a projective system as above to be
$$\varprojlim_I X_i :=\left\{\underline{a}\in \prod_I X_i,\,a_i=f_{ij}(a_j)\,\forall i,j\in I\right\}.$$
It canonically comes with projection maps
$$\pi_i:\varprojlim_I X_i\to X_i$$
for all $i$, sending $\underline{a}$ to $a_i$; the $a_i$ are called \emph{realisations} of $\underline a$. If the category $\cC$ is that of topological spaces, the limit is itself a topological space, endowed with the coarsest topology making all the maps $\pi_i$ continuous; this topology is called the \emph{projective limit topology} or the \emph{initial toplogy}.
\end{definition}

\begin{definition}Let $(J,\geq)$ be a sub-poset of a poset $(I,\geq)$. We say that $J$ is \textit{cofinal} in $I$ if, for all $i\in I$, there exists $j\in J$ with $j\geq i$.
\end{definition}

\begin{proposition}{\cite[Chapitre 3, \textsection 7, Proposition 3]{bourbakiens}}\label{seb} If $(X_i,f_{ij})_I$ be a projective system in some category $\cC$, indexed by a poset $(I,\geq)$, and $J$ is a cofinal sub-poset of $I$, then there is a canonical identification of projective limits
$$\varprojlim_{j\in J} X_j\simeq \varprojlim_{i\in I}X_i.$$
\end{proposition}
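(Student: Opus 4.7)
The plan is to prove this by constructing explicit mutually inverse maps between the two projective limits, using cofinality to extend compatible families and the compatibility conditions to verify that restriction inverts extension. Since the paper works with the concrete set-theoretic model of the projective limit, I will work at that level, then note that in the topological (or locally ringed) setting the resulting bijection is automatically bicontinuous.

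First I would construct the restriction map $R:\varprojlim_{i\in I} X_i \to \varprojlim_{j\in J} X_j$ simply by $(a_i)_{i\in I}\mapsto (a_j)_{j\in J}$; since the compatibility conditions for the $J$-system form a subset of those for the $I$-system, this is trivially well-defined. The substantive construction is the extension map $E:\varprojlim_{j\in J} X_j \to \varprojlim_{i\in I} X_i$. Given $(b_j)_{j\in J}$, for each $i\in I$ use cofinality to pick some $j=j(i)\in J$ with $j\geq i$ and declare $a_i := f_{ij}(b_j)$.

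Before going further I would verify that $J$ is itself filtered, so that the extension is well-defined: given $j_1,j_2\in J$, by filteredness of $I$ there exists $i\in I$ with $i\geq j_1,j_2$, and by cofinality some $j\in J$ with $j\geq i$; then $j\geq j_1, j_2$. Independence of the choice $j(i)$ then follows: if $j,j'\geq i$ with $j,j'\in J$, pick by filteredness $k\in J$ with $k\geq j,j'$ and use the composition law to get $f_{ij}(b_j)=f_{ij}\circ f_{jk}(b_k)=f_{ik}(b_k)=f_{ij'}\circ f_{j'k}(b_k)=f_{ij'}(b_{j'})$. The same kind of chase shows the resulting family $(a_i)$ is compatible: for $i\leq i'$ in $I$, pick $j\in J$ with $j\geq i'$ (hence $j\geq i$), so that $f_{ii'}(a_{i'}) = f_{ii'}\circ f_{i'j}(b_j) = f_{ij}(b_j) = a_i$. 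Checking $R\circ E=\mathrm{id}$ is immediate (restrict back), and $E\circ R=\mathrm{id}$ because for any $(a_i)\in\varprojlim_I X_i$ and any choice $j\geq i$ in $J$, the compatibility of $(a_i)$ already forces $a_i=f_{ij}(a_j)$.

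Finally, in the setting of topological spaces (or locally ringed spaces, which is the case of interest for the applications), I would note that continuity of $R$ is immediate from the universal property since $\pi_j\circ R = \pi_j$ for $j\in J$, and continuity of $E$ from the identity $\pi_i\circ E = f_{ij}\circ \pi_j$ for any choice of $j\in J$ with $j\geq i$. The main obstacle is essentially bookkeeping: the only genuinely nontrivial point is that cofinality together with filteredness of $I$ implies filteredness of $J$, which is what makes the extension construction work.
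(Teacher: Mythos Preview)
Your proof is correct and is essentially the standard argument. Note, however, that the paper does not actually give a proof of this proposition: it is simply cited from Bourbaki (Chapitre~3, \S7, Proposition~3 of \emph{Th\'eorie des ensembles}) and used as a black box. Your explicit restriction/extension construction, together with the observation that cofinality of $J$ in a filtered $I$ forces $J$ itself to be filtered, is exactly how one would prove this from scratch, and nothing is missing.
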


We will require the following technical result on dense open subsets in projective systems of topological spaces.

\begin{proposition}\label{prop_projlimdensity}Let $(X_i,f_{ij})_{i,j\in I}$ be a projective system of compact topological spaces with surjective maps. Let, for some $i\in I$, $U_i$ be a dense open subset of $X_i$. Then $\pi_i\mi U_i$ is a dense open subset in $\varprojlim_I X_i$ with respect to the projective limit topology.
\end{proposition}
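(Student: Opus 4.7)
Openness of $\pi_i^{-1}(U_i)$ in $\varprojlim_I X_i$ is immediate from the continuity of $\pi_i$, so the content is density. I would argue via the standard basis for the projective limit topology, consisting of finite intersections $\bigcap_{k\in F}\pi_k^{-1}(V_k)$ with $F\subset I$ finite and $V_k\subset X_k$ open. The filtered structure of $I$ lets me pick a single common upper bound $j\in I$ of $F\cup\{i\}$ and rewrite any such basic open as $\pi_j^{-1}(V)$ with $V:=\bigcap_{k\in F}f_{kj}^{-1}(V_k)\subseteq X_j$ open. A standard compactness argument (essentially Proposition \ref{seb} combined with the Mittag--Leffler-type property of cofiltered limits of compact Hausdorff spaces with surjective bonding maps) ensures that the projection $\pi_j$ is itself surjective, so $V$ is nonempty whenever the basic open is.

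With this reduction, the goal becomes to show that $\pi_j^{-1}(V)\cap \pi_i^{-1}(U_i)\neq\emptyset$ for each $j\geq i$ and nonempty open $V\subset X_j$. Since $\pi_i=f_{ij}\circ \pi_j$, this intersection equals $\pi_j^{-1}\bigl(V\cap f_{ij}^{-1}(U_i)\bigr)$, and by surjectivity of $\pi_j$ it is nonempty if and only if $V\cap f_{ij}^{-1}(U_i)\neq \emptyset$. The proof thus reduces to showing that $f_{ij}^{-1}(U_i)$ is dense in $X_j$ for every $j\geq i$, which by taking complements is equivalent to showing that the closed set $f_{ij}^{-1}(X_i\setminus U_i)$ has empty interior.

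The main obstacle is precisely this last density claim, since for an arbitrary continuous surjection between compact Hausdorff spaces the preimage of a dense open need not be dense (a small open set can be collapsed onto a boundary point of $U_i$). I expect the paper to close this step using the specific structure of the systems to which the proposition is applied in Section \ref{sect_compact}: the bonding maps in question arise from birational morphisms between irreducible projective varieties, so preimages of proper Zariski closed subsets are proper Zariski closed subsets of an irreducible variety and are therefore nowhere dense in both the Zariski and analytic topologies. Granting this density input, the reduction above immediately concludes.
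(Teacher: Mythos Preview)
Your reduction is essentially the same as the paper's: reduce to basic opens $\pi_j^{-1}(V_j)$, pass to a common upper bound $k$ of $i$ and $j$, use surjectivity of $\pi_k$ (the paper cites Bourbaki for this, matching your compactness/Mittag--Leffler remark), and observe that everything comes down to the density of $f_{ik}^{-1}(U_i)$ in $X_k$.

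At precisely the point you flag as the obstacle, the paper writes: ``$X_i$ and $X_k$ are compact and $f_{ik}$ is surjective and continuous, and is hence an open map. Since density is preserved by open maps, it follows that $f_{ik}^{-1}(U_i)$ is dense in $X_k$.'' Your caution here is justified: a continuous surjection between compact Hausdorff spaces is a \emph{closed} map, but not in general an open one, and in fact the proposition as stated admits counterexamples. Take the two-term system $X_1=X_2=[0,1]$ with $f_{12}$ sending $[0,\tfrac12]$ to $0$ and mapping $[\tfrac12,1]$ linearly onto $[0,1]$; then $U_1=(0,1]$ is dense open in $X_1$, yet $\pi_1^{-1}(U_1)=f_{12}^{-1}(U_1)=(\tfrac12,1]$ is not dense in $X_2=\varprojlim X_i$.

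So the paper does \emph{not} invoke the birational/irreducible structure you anticipated; it asserts the general topological claim, and that assertion is erroneous. Your proposed fix is exactly what the application in Section~\ref{sect_compact} actually needs: for birational morphisms between irreducible projective varieties, the preimage of a proper Zariski-closed set is proper Zariski-closed, hence nowhere dense in both the Zariski and analytic topologies, which restores the density of $f_{ik}^{-1}(U_i)$ and lets the rest of the argument go through.
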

\begin{proof}
Let $j\in I$, and fix $x_j\in X_j$. Since open sets of the form $\pi_j\mi(V_j)$ form a basis for the projective limit topology, by \cite[Chapitre 1, \textsection 4, Proposition 9]{bourbakitop14}, it suffices to show that $\pi_j\mi V_j \cap \pi_i\mi U_i$ is nonempty. 

Consider $k\leq i,j$. Note that $X_i$ and $X_k$ are compact and $f_{ik}$ is surjective and continuous, and is hence an open map. Since density is preserved by open maps, it follows that $f_{ik}\mi U_i$ is dense in $X_k$. Furthermore, since $f_{jk}$ is surjective, $f_{jk}\mi V_j$ is a nonempty open set. Combining these two facts, the open set $(f_{ik}\mi U_i)\cap (f_{jk}\mi V_j)\subset X_k$ is nonempty.

Since the maps $f_{ij}:U_j\to U_i$ are surjective, the projection maps $\pi_i:\varprojlim_I X_i\to X_i$ are also surjective by \cite[Chapitre 3, \textsection 7, Proposition 5]{bourbakiens}. Thus, $$\pi_k\mi((f_{ik}\mi U_i)\cap (f_{jk}\mi V_j))=(\pi_k\mi(f_{ik}\mi U_i))\cap(\pi_k\mi(f_{jk}\mi V_j))$$
is nonempty. Thus, to prove the proposition, it now suffices to notice that $$\pi_k\mi(f_{ik}\mi U_i)\subset \pi_i\mi U_i$$ and that $$\pi_k\mi(f_{jk}\mi V_j)\subset \pi_j\mi V_j,$$ which is trivial by definition of the maps $f_{ij}$ and $\pi_i$.
\end{proof}

\subsection{Zariski--Riemann spaces}\label{RZspaces}

We now let $X$ be a compact, complex projective variety.

\begin{definition}
The \emph{Zariski--Riemann space} $\underline X$ of $X$ is the  projective limit
$$\underline{X}:=\varprojlim\,(\pi_Y:Y\to X),$$
where the limit is taken over the set of birational morphisms $\pi_Y:Y\to X$ (which we will call \textit{birational models} of $X$), with $Y$ a complex projective variety, and one says that $\pi_Z$ \emph{dominates} $\pi_Y$ if there exist a birational morphism $\nu$ with $\pi_Z=\pi_Y\circ\nu$.
\end{definition}

Beyond the case where $\dim X=1$, where $\underline X$ is isomorphic to the (unique) smooth model of $X$, $\underline X$ is not a scheme in general, as there exist infinitely many birational models of $X$ as above -- in particular, a point in $\underline X$ cannot admit an affine neighbourhood in the limit Zariski topology. It is however known to be a locally ringed space \cite[Proposition 4.1.10]{katofujiBook} (the fact that the Zariski--Riemann itself is a locally ringed space is much more classical). Zariski--Riemann spaces are typically defined in terms of valuations; this perspective will only briefly be used in this Section and will play no further role beyond this. We will always consider the Zariski--Riemann space as a locally ringed space.

We now justify our claim in the introduction, that Zariski--Riemann spaces determine the birational equivalence class of a projective variety. To make this precise, consider the set consisting of locally ringed spaces $(S,O_S)$ which are the  Zariski--Riemann space of some projective variety, where we emphasise that the information of \emph{which} variety $S$ is the Zariski--Riemann space of is \emph{not} part of the information. Consider also the set consisting of function fields of projective varieties; we recall that two projective varieties are birational if and only if their function fields agree, and so the set of function fields precisely corresponds to the set of projective varieties up to birational equivalence. We show that these two sets are naturally in bijection. 

\begin{proposition}\label{birationalRZ}
The Zariski--Riemann spaces of two projective varieties $X$ and $Y$ are isomorphic as locally ringed spaces if and only if $X$ and $Y$ are birational.
\end{proposition}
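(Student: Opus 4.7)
The plan is to establish both directions by unpacking the construction of $\underline X$. The forward direction reduces to a cofinality statement for the indexing posets of birational models, while the reverse direction relies on recovering the function field $\bbc(X)$ intrinsically from the locally ringed space structure of $\underline X$.

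For the forward direction, suppose there is a birational map $f:X\dashrightarrow Y$. Consider the poset $\cP$ of triples $(Z,\pi_Z,\sigma_Z)$ in which $\pi_Z:Z\to X$ and $\sigma_Z:Z\to Y$ are both birational morphisms, ordered by simultaneous domination in both factors. Given any birational model $\pi_Z:Z\to X$, resolving the indeterminacy of $f\circ\pi_Z$ produces a variety $Z'\to Z$ also equipped with a birational morphism to $Y$, hence an element of $\cP$ dominating $(Z,\pi_Z)$. Thus the forgetful functor from $\cP$ to the system of $X$-models is cofinal, and symmetrically for the $Y$-models. Proposition \ref{seb} then identifies both $\underline X$ and $\underline Y$ with $\varprojlim_{\cP} Z$ as locally ringed spaces.

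For the reverse direction, I would recover $\bbc(X)$ from the intrinsic structure of $\underline X$. The generic point $\eta_Z$ of each birational model $Z$ is preserved by birational (hence dominant) morphisms $\nu:W\to Z$, so the family $(\eta_Z)_Z$ is compatible with the projective system and determines a distinguished point $\underline\eta\in\underline X$. This point is intrinsic to $\underline X$---for instance it may be characterised as the unique point whose local ring is a field, equivalently as the unique generic point of the underlying sober, irreducible space---and its stalk is the filtered colimit $\varinjlim_Z \cO_{Z,\eta_Z}=\varinjlim_Z \bbc(Z)=\bbc(X)$, where every transition map is the identity on $\bbc(X)$. An isomorphism of locally ringed spaces $\underline X\cong\underline Y$ must match these distinguished points and induce an isomorphism on stalks, i.e.\ an isomorphism $\bbc(X)\cong\bbc(Y)$; thus $X$ and $Y$ are birational.

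The main obstacle will be justifying the stalk computation rigorously: limits in the category of locally ringed spaces can behave subtly, and one needs to verify that the stalk of the structure sheaf of $\underline X$ at $\underline\eta$ really is the filtered colimit of the stalks along the projections. The cleanest way to circumvent this is to invoke the classical valuation-theoretic description of $\underline X$, under which points correspond to valuations of $\bbc(X)/\bbc$ and stalks are the corresponding valuation rings; then $\bbc(X)$ is literally the stalk at the trivial valuation, and the reverse direction becomes essentially tautological.
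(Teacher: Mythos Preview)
Your proposal is correct and essentially follows the paper's strategy. The forward direction is identical: both you and the paper argue that the system of common models of $X$ and $Y$ is cofinal in each of the two indexing posets, and then invoke Proposition~\ref{seb}. For the reverse direction, both arguments ultimately rest on the valuation-theoretic description of $\underline X$ to extract $\bbc(X)$ from the locally ringed space structure; the only difference is that you single out the generic point $\underline\eta$ (the trivial valuation), whose stalk \emph{is} $\bbc(X)$, while the paper takes an arbitrary point $s$ and recovers $\bbc(X)$ as the \emph{fraction field} of the valuation ring $\cO_{s,S}$ via Bourbaki's characterisation. Your route is marginally cleaner in that it avoids the extra step of passing to fraction fields, but it requires checking that $\underline\eta$ is intrinsically determined (which you do); the paper's route avoids that by working at a generic point but then needs the fraction-field identification. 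Either way the content is the same, and your closing remark that one should fall back on the valuation description to compute the stalk is exactly what the paper does.
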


\begin{proof}

Assume that $X$ and $Y$ are two birational varieties. Let $X'$ be a birational model for $X$, and $Y'$ be a birational model for $Y$. Then, there exists a model $Z'$ dominating $X'$ and $Y'$. In other words, the projective system of projective varieties that dominate $X$ and $Y$ is cofinal in the projective systems of birational models of $X$ and $Y$, meaning that the projective limits of all three systems are isomorphic by Proposition \ref{seb}.

Conversely, let $(S,O_S)$ be a locally ringed space which is the Zariski--Riemann space of two varieties $X$ and $Y$. Let $s\in S$. By \cite[Theorem 3.2.5]{localisation}, $s$ is identified with a valuation $\nu_X$ on the function field of $X$, which is trivial on $\bbc$. Likewise, it also corresponds to such a valuation $\nu_Y$ on $\bbc(Y)$. To those valuations are associated valuation rings $\cO_{\nu_X}$, $\cO_{\nu_Y}$. One now has isomorphisms
$$\cO_{\nu_X}\simeq\cO_{s,S}\simeq\cO_{\nu_Y},$$
given by \cite[Lemma 3.2.3]{localisation}. We can pick $s$ such that $\cO_{s,S}$ is integral. By \cite[Chapitre 6, \textsection 2, Théorème 1]{bourbakicommalg}, it follows that the fraction field of $\cO_{\nu_X}$ is $\bbc(X)$, and that of $\cO_{\nu_Y}$ is $\bbc(Y)$. That is, the local ring at $s\in S$ determines the function fields of both $X$ and $Y$, which implies that  the function fields of $X$ and $Y$ are actually isomorphic, concluding the proof.\end{proof}

Thus the information of the Zariski--Riemann space of a projective variety is completely equivalent to its birational equivalence class: the Zariski--Riemann space determines the birational geometry of the variety.

\section{Universal variation of GIT}\label{sect_main}

\subsection{Technical results on relative GIT}

We first recall the following result due to Reichstein which compares stable and semistable loci of two projective varieties related by a birational morphism; the result is originally due to Kirwan in the smooth case  \cite{kirwandesing}. We also note that a quasiprojective version, which we shall not need here, appears in work of Hu \cite[Theorem 3.13]{yihurelgit}.

\begin{theorem}{\cite[Theorem 2.1]{reichstein}}\label{thm_compstable}
Let $\pi:Z\to Y$ be a birational morphism of projective varieties. Let $L_Y$ be an ample $G$-linearised line bundle on $Y$, and let $L_Z$ be an ample $G$-linearised line bundle on $Z$. Let $L_{t}:=\pi^*L_Y-tL_Z$. Then, for all $t$ sufficiently small,
\begin{enumerate}[(i)]
\item $Z^{ss}\subset\pi\mi(Y^{ss})$;
\item $\pi\mi(Y^s)\subset Z^{s}$,
\end{enumerate}
where $Z^{ss}$ and $Z^s$ denote the semistable and stable loci with respect to $L_t$ respectively.
\end{theorem}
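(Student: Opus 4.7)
The strategy is to invoke the Hilbert--Mumford numerical criterion for $L_t$ and exploit uniform estimates coming from compactness together with the Kempf--Hesselink stratification. The key initial observation is that Hilbert--Mumford weights are additive in the linearisation and functorial under pullback, giving
$$
\mu^{L_t}(z, \lambda) \;=\; \mu^{L_Y}(\pi(z), \lambda) \;-\; t\,\mu^{L_Z}(z, \lambda)
$$
for every $z \in Z$ and every one-parameter subgroup $\lambda : \mathbb{G}_m \to G$. The plan is then to estimate both terms separately and to choose $t$ small enough that the first dominates, uniformly over the relevant loci for both (i) and (ii).

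For (i), fix a conjugation-invariant norm $\|\cdot\|$ on the space of one-parameter subgroups. Compactness of $Z$ together with standard estimates on Hilbert--Mumford weights yield a constant $C > 0$ with $|\mu^{L_Z}(z,\lambda)| \leq C\|\lambda\|$ for all $z$ and $\lambda$. To bound the first term from above, note that $Y \setminus Y^{\mathrm{ss}}$ is closed, hence compact; by Kempf's theorem on the existence of optimally destabilising one-parameter subgroups and the associated Hesselink stratification (a finite stratification where the destabilising 1-PS is constant up to conjugacy on each stratum), there exists $c > 0$ such that each $y \in Y \setminus Y^{\mathrm{ss}}$ admits a $\lambda$ of unit norm with $\mu^{L_Y}(y, \lambda) \leq -c$. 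Substituting at $y = \pi(z)$ gives $\mu^{L_t}(z, \lambda) \leq -c + tC < 0$ for $t < c/C$, uniformly in $z$ with $\pi(z) \in Y \setminus Y^{\mathrm{ss}}$. Hence $Z^{ss}(L_t) \subset \pi^{-1}(Y^{ss})$ for such $t$.

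For (ii), the symmetric strategy calls for a uniform \emph{positive} lower bound on the normalised stability function $\Lambda(y) := \inf_{\lambda \ne 0} \mu^{L_Y}(y, \lambda)/\|\lambda\|$ on $y \in Y^s$. This is the main technical obstacle: unlike $Y \setminus Y^{\mathrm{ss}}$, the stable locus $Y^s$ is open rather than closed, so a direct compactness argument does not apply. The resolution goes via rationality and (lower) semicontinuity of $\Lambda$: after normalising, the relevant 1-PS range over a finite set up to conjugacy and bounded multiplicity (a consequence of the lattice structure of cocharacters, as in Kirwan's analysis in the smooth case and its extension by Reichstein), which forces $\Lambda$ to be bounded below by a positive constant $c' > 0$ on $Y^s$. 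Combined with the uniform upper bound on $|\mu^{L_Z}|$, this gives $\mu^{L_t}(z, \lambda) \geq (c' - tC)\|\lambda\| > 0$ for $t < c'/C$ and every nontrivial $\lambda$, so $\pi^{-1}(Y^s) \subset Z^s(L_t)$ uniformly for $t$ sufficiently small, completing the proof.
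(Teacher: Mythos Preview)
The paper does not give its own proof of this statement: it is quoted verbatim from Reichstein (generalising Kirwan's smooth version) and is used as a black box throughout. So there is no in-paper argument to compare against.

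As for the proof itself, your overall strategy via the Hilbert--Mumford criterion and the additivity/functoriality identity
\[
\mu^{L_t}(z,\lambda)=\mu^{L_Y}(\pi(z),\lambda)-t\,\mu^{L_Z}(z,\lambda)
\]
is sound, and part (i) is fine: the uniform bound $|\mu^{L_Z}(z,\lambda)|\le C\|\lambda\|$ follows from the finiteness of the $T$-weights in a projective embedding by $L_Z$, and the Kempf--Hesselink stratification of the unstable locus of $Y$ gives the uniform negative bound $-c$ you need. Part (ii) is correct in substance but your justification is too loose. The claim that $\Lambda(y)=\inf_{\lambda\neq 0}\mu^{L_Y}(y,\lambda)/\|\lambda\|$ is bounded below by a positive constant on $Y^s$ does \emph{not} follow from semicontinuity (which goes the wrong way: $\Lambda$ is upper semicontinuous, and $Y^s$ is open, so a compactness argument fails exactly as you note). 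The correct mechanism is combinatorial finiteness: after embedding $Y\hookrightarrow\mathbb{P}(V)$ via a power of $L_Y$, the function $\lambda\mapsto\mu^{L_Y}(y,\lambda)$ depends only on the \emph{state} of $y$ with respect to a fixed maximal torus $T$ (i.e.\ the set of $T$-weights of $V$ on which $y$ has nonzero component), and there are only finitely many possible states. Each state $S$ arising from a stable point satisfies $\inf_{\|\lambda\|=1}\mu_S(\lambda)>0$ by compactness of the sphere, and taking the minimum over the finitely many stable states gives the uniform $c'>0$. Your phrase ``the relevant 1-PS range over a finite set up to conjugacy'' gestures at this but is not accurate as stated; it is the \emph{states}, not the one-parameter subgroups, that are finite in number.
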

\begin{example}\label{ex_compstable}
By \cite[Proposition 7.10]{hartshorne}, if $\pi$ is a blow-up with exceptional divisor $E$, then one can replace $L_t$ with $\pi^*L_Y-tE$ in the statement of Theorem \ref{ex_compstable}.
\end{example}

We will also crucially rely on a result of Kirwan similarly around GIT quotients of birational models \cite[Lemma 3.11]{kirwandesing}. Although Kirwan's result is stated in the smooth setting, her proof goes through verbatim in the singular setting using the above result of Reichstein: 

\begin{lemma}\label{lem_kirwan}Let $Y$ be a  projective variety on which a reductive group $G$ acts, endowed with a $G$-linearised ample line bundle $L_Y$. Let $V\subset Y$ be  fixed by the action of $G$, and consider the blow-up
$$\mu_V:\Bl_V Y\to Y$$
with exceptional divisor $E_V$. Then, for all $t$ sufficiently small, there is an isomorphism
$$(\Bl_V Y)\sslash_{L-tE_V}G\simeq \Bl_{V\sslash_{L_Y}G}(Y\sslash_{L_Y}G),$$
where  $V\sslash_{L_Y} G$ denotes the image of $V$ under the quotient morphism $Y^{ss}\to Y\sslash_{L_Y}G$.
\end{lemma}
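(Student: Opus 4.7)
The plan is to apply Reichstein's comparison Theorem \ref{thm_compstable} to control the (semi)stable loci on $\Bl_V Y$, then reduce the global isomorphism to a local statement on affine GIT charts where the blow-up commutes with taking $G$-invariants, by reductivity and the assumption that $V$ is pointwise fixed.

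First I would invoke Theorem \ref{thm_compstable} for the birational morphism $\pi:\Bl_V Y\to Y$ with the polarisations $L_Y$ and $L_Y-tE_V$ (Example \ref{ex_compstable}). For $t>0$ small this gives
$$(\Bl_V Y)^{ss}\subset \pi\mi(Y^{ss}),\qquad \pi\mi(Y^s)\subset (\Bl_V Y)^s.$$
Since $\pi$ is an isomorphism away from $E_V$, the induced morphism $\bar\pi:(\Bl_V Y)\sslash_{L_Y-tE_V}G\to Y\sslash_{L_Y}G$ is a well-defined birational morphism which is an isomorphism above the open set $Y\sslash_{L_Y}G\setminus V\sslash_{L_Y}G$; it remains to identify it with the blow-up along $V\sslash_{L_Y}G$.

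Next I would work locally on $Y\sslash_{L_Y}G$. By Lemma \ref{zariskiopenaffine}, the GIT quotient is covered by affine charts of the form $U_s\sslash G=\Spec (A^G)$, where $U_s=Y\setminus V(s)=\Spec A$ for $s\in H^0(Y,kL_Y)^G$. Let $I\subset A$ be the $G$-invariant ideal cutting out $V\cap U_s$, and set $I^G:=I\cap A^G$, which cuts out $V\sslash_{L_Y}G$ inside $U_s\sslash G$. The affine GIT quotient is polarisation-independent, and a standard computation using $\Proj$ of the Rees algebra reduces the local statement to the identification of graded rings
$$\bigoplus_{k\geq 0}(I^k)^G \;=\; \bigoplus_{k\geq 0}(I^G)^k$$
in sufficiently high degrees (which is all that is needed to match the $\Proj$). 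The inclusion $(I^G)^k\subset (I^k)^G$ is immediate; the reverse uses crucially that $V$ is \emph{pointwise} fixed by $G$, which forces $G$ to act trivially on $A/I$. Combined with the existence of the Reynolds operator $R:A\to A^G$ and the $G$-equivariant splitting $A=A^G\oplus A^\perp$ coming from reductivity of $G$, this allows one to write any $G$-invariant element of $I^k$ as a sum of products of $k$ elements of $I^G$ up to higher order terms absorbed by saturation.

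Finally, the local isomorphisms $\pi\mi(U_s)\sslash G\simeq \Bl_{I^G}(U_s\sslash G)$ glue across overlaps by the canonicity of $\Proj$ of Rees algebras and the compatibility of affine GIT quotients with restriction to $G$-invariant open affines, producing the desired global isomorphism $(\Bl_V Y)\sslash_{L_Y-tE_V}G\simeq \Bl_{V\sslash_{L_Y}G}(Y\sslash_{L_Y}G)$. The main obstacle is the local commutative-algebraic step: showing that invariants of powers of $I$ give the same $\Proj$ as powers of $I^G$. This is where the hypothesis that $V$ is pointwise $G$-fixed is essential, since without it the Reynolds operator does not suffice to relate the two graded algebras, and the $\Proj$ would generally differ from the naive blow-up on the quotient.
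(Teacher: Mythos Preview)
The paper does not give its own proof of this lemma: it simply cites Kirwan \cite[Lemma 3.11]{kirwandesing} and remarks that, in the singular case, her argument goes through once one replaces her smooth comparison of (semi)stable loci by Reichstein's Theorem \ref{thm_compstable}. So there is little to compare your approach against beyond Kirwan's original proof.

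That said, your proposal has a genuine gap. You read ``$V$ fixed by the action of $G$'' as \emph{pointwise} fixed, and you make this reading load-bearing: your local step hinges on $G$ acting trivially on $A/I$ so that the Reynolds operator can be used to match $\bigoplus_k (I^k)^G$ with $\bigoplus_k (I^G)^k$. But in Kirwan's lemma, and in the way the present paper uses it (see the proof of Theorem \ref{thm_uvgit}, where one applies the lemma to the closure $U$ of $\pi_Y^{-1}(V)$, a $G$-invariant but certainly not pointwise-fixed subvariety), $V$ is merely $G$-\emph{invariant}. Under that weaker hypothesis $G$ does not act trivially on $A/I$, and your mechanism for comparing the two Rees-type algebras disappears. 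So as written your argument does not prove the statement in the generality required.

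Even in the pointwise-fixed case the local step is not actually established. Already for $A=\bbc[x,y]$ with $\bbc^*$ acting with weights $(1,-1)$ and $I=(x,y)$ (so $V=\{0\}$ is a fixed point), one has $I^G=(xy)$ while $(I^2)^G=(xy)\neq((xy)^2)=(I^G)^2$; the graded rings genuinely differ, and it is only after passing to a Veronese that their $\Proj$'s agree. Your phrase ``up to higher order terms absorbed by saturation'' gestures at this but is not a proof. Kirwan's argument avoids this algebra altogether: she identifies $\bar\pi:(\Bl_V Y)\sslash G\to Y\sslash_{L_Y}G$ with the blow-up via the universal property, by exhibiting an invertible pullback of the ideal sheaf of $V\sslash_{L_Y}G$ and checking the resulting morphism is an isomorphism using the control on (semi)stable loci from Theorem \ref{thm_compstable}. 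If you want to salvage your approach, you would need to drop the pointwise-fixed assumption and instead prove directly that the two $\Proj$'s coincide for any $G$-stable ideal $I$; this is essentially equivalent to Kirwan's statement and is not easier than her geometric route.
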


\subsection{Construction of the projective limit}

We now fix a projective variety $X$ together with an action of a reductive group $G$  on $X$.

\begin{definition}A \textit{GIT quotient (by $G$) over $X$} is the data of a GIT quotient $Y\sslash_{L_Y}G$, where:
\begin{enumerate}[(i)]
    \item there is a $G$-equivariant, birational morphism $\pi_Y:Y\to X$, with $Y$ a  projective variety on which  a reductive group $G$ acts;
    \item $L_Y$ is a $G$-linearised ample line bundle on $Y$;
    \item the stable locus on $Y$ for the $G$-action with respect to the polarisation $L_Y$ is nonempty.
\end{enumerate}
\end{definition}

We begin with the observation that all GIT quotients over $X$ are birational.
\begin{proposition}\label{prop_allbir}Let $Y\sslash_{L_Y}G$ and $Z\sslash_{L_Z}G$ be GIT quotients over $X$. Then, there exists a  birational map $\mu:Y\sslash_{L_Y}G\dashrightarrow Z\sslash_{L_Z}G$.
\end{proposition}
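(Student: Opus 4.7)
The plan is to compare $Y\sslash_{L_Y}G$ and $Z\sslash_{L_Z}G$ via a third GIT quotient built on a common $G$-equivariant birational model $W$ of $Y$ and $Z$, using Reichstein's stability comparison (Theorem~\ref{thm_compstable}) to engineer polarisations on $W$ that map birationally to both given quotients.

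Since $Y$ and $Z$ both admit $G$-equivariant birational morphisms to $X$, the induced rational map $Y\dashrightarrow Z$ is $G$-equivariant. Let $\overline\Gamma\subset Y\times_X Z$ denote the closure of its graph, which is invariant under the diagonal $G$-action, and let $W$ be a $G$-equivariant resolution of singularities of $\overline\Gamma$ (available in characteristic zero). This comes equipped with $G$-equivariant birational morphisms $p\colon W\to Y$ and $q\colon W\to Z$. Fix any $G$-linearised ample line bundle $M$ on $W$ (standard given the reductivity of $G$). Theorem~\ref{thm_compstable} applied to $p$ with the ample line bundles $L_Y$ on $Y$ and $M$ on $W$ yields, for all sufficiently small $t>0$, the inclusions
$$W^{ss}(L_{W,t})\subset p\mi\bigl(Y^{ss}(L_Y)\bigr),\qquad p\mi\bigl(Y^s(L_Y)\bigr)\subset W^s(L_{W,t}),$$
where $L_{W,t}:=p^*L_Y-tM$. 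The second inclusion shows that $(W,L_{W,t})$ defines a GIT quotient over $X$, and the first, together with the $G$-equivariance of $p$, induces a morphism on quotients $\tilde p\colon W\sslash_{L_{W,t}}G\to Y\sslash_{L_Y}G$ which is birational: $p$ is an isomorphism over a $G$-invariant dense open subset $V\subset Y^s(L_Y)$, and this descends to an isomorphism of the corresponding geometric quotients inside both GIT quotients. Repeating the argument for $q\colon W\to Z$ with polarisation $L'_{W,s}:=q^*L_Z-sM$ gives a birational morphism $\tilde q\colon W\sslash_{L'_{W,s}}G\to Z\sslash_{L_Z}G$.

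To conclude, observe that $U:=W^s(L_{W,t})\cap W^s(L'_{W,s})$ is a dense $G$-invariant open subset of $W$ (being the intersection of two dense opens in the irreducible $W$), so its geometric quotient $U/G$ is a dense open subscheme of both $W\sslash_{L_{W,t}}G$ and $W\sslash_{L'_{W,s}}G$. These two quotients therefore share a function field, and composing with $\tilde p$ and $\tilde q$ produces the desired birational equivalence between $Y\sslash_{L_Y}G$ and $Z\sslash_{L_Z}G$.

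The main technical subtlety is ensuring that the perturbed line bundle $p^*L_Y-tM$ is genuinely ample for small $t>0$, since subtracting an ample class from a nef one need not be ample. This is automatic when $M$ is taken $p$-relatively ample rather than absolutely ample, as in the setting of Example~\ref{ex_compstable} with $M=-E$ for a $p$-exceptional divisor. One can arrange for $p$ (and similarly $q$) to be a sequence of $G$-equivariant blow-ups along $G$-invariant centres via equivariant Chow's lemma, in which case the relevant polarisations can be assembled from the exceptional divisors and the ampleness issue disappears.
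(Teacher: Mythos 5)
Your proof is correct in its essentials, but it takes a genuinely heavier route than the paper. The paper's argument for Proposition~\ref{prop_allbir} is elementary and never invokes Theorem~\ref{thm_compstable} or resolution of singularities: since $Y$ and $Z$ are both $G$-equivariantly birational to $X$ (hence to each other), they share a common $G$-invariant Zariski-dense open subset $U$, which one can shrink to lie in the stable loci of both and in the affine complements $Y-V(s_Y)$ and $Z-V(s_Z)$ of invariant sections non-vanishing at chosen stable points. The polarisation-independent affine GIT quotient $U\sslash G$ from Lemma~\ref{zariskiopenaffine} then embeds as a dense open in both $Y\sslash_{L_Y}G$ and $Z\sslash_{L_Z}G$, and birationality is immediate. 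Your argument instead builds a common $G$-equivariant model $W$ over $Y$ and $Z$, perturbs polarisations using Reichstein's comparison theorem, and produces birational morphisms from $W\sslash G$ to both quotients---machinery that the paper reserves for the genuinely harder filteredness statement in Theorem~\ref{thm_uvgit}, where one really does need to produce a GIT quotient over $X$ dominating two given ones. Your route buys you slightly more than asked for (explicit birational morphisms from a third GIT quotient to both $Y\sslash_{L_Y}G$ and $Z\sslash_{L_Z}G$, essentially proving a weak form of filteredness along the way), at the cost of more technology. Two small points worth flagging: (1) the ampleness issue you raise about $p^*L_Y - tM$ is real, and your fix is in the right spirit, but ``equivariant Chow's lemma'' is not the right tool---what you want is that a proper birational morphism of projective varieties is a blow-up along a (here $G$-invariant) ideal sheaf, so that Hartshorne~II.7.10 gives ampleness of $p^*L_Y - tE$ for small $t$; (2) like the paper's own proof, what you establish is birational equivalence, not literally a birational \emph{morphism} as the statement reads---this appears to be a slight imprecision in the proposition's phrasing, and what is actually used downstream is only mutual birationality.
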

\begin{proof}By assumption there exist a stable point $x_Y\in Y$ and a section $s_{k_YY}\in H^0(Y,k_Y L_Y)$ with $s_{Y}(x_Y)\neq 0$ for some positive integer $k_Y$, as well as a stable point $x_Z\in Z$ and a section $s_{Z}\in H^0(Z,k_ZL_Z)$ with $s_{Z}(x_Z)\neq 0$ for some positive integer $k_Z$. Let $V_Y$, $V_Z$ denote the vanishing loci of $s_Y$ and $s_Z$ respectively. Then, because $x_Y$ is furthermore stable, the affine GIT quotient $(Y-V_Y)\sslash G$ embeds as a Zariski-dense open subset of $Y\sslash_{L_Y}G$ by Lemma \ref{zariskiopenaffine}, and likewise for $Z$. 

On the other hand, both $Y$ and $Z$ are $G$-equivariantly birational to $X$, hence to each other, so that one may find a Zariski open subset $U$ which embeds $G$-equivariantly into both $Y-V_Y$ and $Z-V_Z$; Zariski openness of the stable locus implies $U$ also contains stable points. Then, the (affine) GIT quotient $U\sslash G$ embeds as a Zariski-dense open subset of both $(Y-V_Y)\sslash G$ and $(Z-V_Z)\sslash G$, which are respectively dense in $Y\sslash_{L_Y}G$ and $Z\sslash_{L_Z}G$, proving the desired result.
\end{proof}

\begin{definition}We define a partial order on the set of GIT quotients over $X$, whereby $Y\sslash_{L_Y}G \geq Z\sslash_{L_{Z}}G$ if and only if there exists a birational \textit{morphism} $\pi:Y\sslash_{L_Y} G\to Z\sslash_{L_Z} G$.
\end{definition}
We now show that, endowed with this partial order, the set of GIT quotients over $X$ defines a projective system.

\begin{theorem}\label{thm_uvgit}The set of GIT quotients over $X$, together with the partial order  $\geq$ defined above, is a projective system.
\end{theorem}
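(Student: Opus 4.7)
The theorem asserts that the set of GIT quotients over $X$, equipped with the partial order $\geq$, forms a projective system. Properties (ii) and (iii) in the definition of a projective system are immediate: the objects are the GIT quotients themselves, the morphisms $f_{ij}$ are the birational morphisms witnessing $\geq$, and associativity of composition is automatic, since two birational morphisms agreeing on a dense open set must agree globally. The substance of the statement is the filtered property (i): given any two GIT quotients $Y\sslash_{L_Y} G$ and $Z \sslash_{L_Z} G$ over $X$, I must construct a third GIT quotient $W\sslash_{L_W} G$ over $X$ admitting birational morphisms to both.

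The plan is to proceed in two stages. First, I would construct a common equivariant birational domination $W$ of $Y$ and $Z$: since both are $G$-equivariantly birational to $X$ by the definition of GIT quotient over $X$, they are $G$-equivariantly birational to each other, and the Zariski closure of the graph of $Y \dashrightarrow Z$ inside $Y \times Z$ is $G$-invariant under the diagonal action, projecting birationally onto both factors. Applying equivariant resolution of singularities yields a projective variety $W$ with $G$-equivariant birational morphisms $\pi_Y : W \to Y$ and $\pi_Z : W \to Z$, compatible with the given maps to $X$.

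Second, I would endow $W$ with a $G$-linearised ample line bundle $L_W$ built as a small perturbation of the pullbacks of $L_Y$ and $L_Z$, of the form
\[
L_W := a\, \pi_Y^* L_Y + b\, \pi_Z^* L_Z - tE,
\]
where $a, b > 0$, $t > 0$ is small, and $E$ is a $G$-linearised divisor on $W$ chosen so as to ensure ampleness of $L_W$ (concretely, a combination of exceptional divisors of $\pi_Y$ and $\pi_Z$, in the spirit of Lemma \ref{lem_kirwan}). Applying Theorem \ref{thm_compstable} to each of $\pi_Y$ and $\pi_Z$ would give the inclusions $W^{ss}(L_W) \subset \pi_Y^{-1}(Y^{ss}) \cap \pi_Z^{-1}(Z^{ss})$ and $\pi_Y^{-1}(Y^s) \cap \pi_Z^{-1}(Z^s) \subset W^s(L_W)$. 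The semistable-locus inclusion induces, via the categorical quotient property of GIT, $G$-invariant morphisms $W^{ss}(L_W) \to Y \sslash_{L_Y} G$ and $W^{ss}(L_W) \to Z \sslash_{L_Z} G$ which descend to morphisms $W \sslash_{L_W} G \to Y\sslash_{L_Y} G$ and $W\sslash_{L_W} G \to Z\sslash_{L_Z} G$; these are birational because they restrict to isomorphisms on the geometric quotients of the respective dense open stable loci. The stable-locus inclusion, together with nonemptiness of $Y^s$ and $Z^s$ hypothesised in the definition of GIT quotient over $X$, ensures $W^s(L_W) \neq \emptyset$, so $W\sslash_{L_W} G$ is itself a legitimate GIT quotient over $X$.

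The main obstacle is the simultaneous applicability of Theorem \ref{thm_compstable}: the result as stated compares stability through a single birational morphism under a small Kirwan-style perturbation, whereas $L_W$ involves contributions from both pullbacks and is not obviously close to either $\pi_Y^* L_Y$ or $\pi_Z^* L_Z$ alone. I would address this either by a careful two-parameter analysis in which $L_W$ can be rescaled so as to be sufficiently close to $\pi_Y^* L_Y$ in one regime and to $\pi_Z^* L_Z$ in another---tracking Mumford weights along $1$-parameter subgroups via the Hilbert--Mumford numerical criterion underlying Reichstein's proof---or, alternatively, by factoring $\pi_Y$ and $\pi_Z$ as sequences of $G$-equivariant blow-ups along $G$-invariant centres via equivariant weak factorisation, and applying Lemma \ref{lem_kirwan} inductively to build the required domination step by step.
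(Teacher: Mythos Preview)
Your reduction to the filtered property is correct, but the construction you propose has a genuine gap at exactly the point you flag yourself. A single ample $G$-linearised line bundle $L_W$ cannot be simultaneously close (after rescaling) to both $\pi_Y^*L_Y$ and $\pi_Z^*L_Z$: these are distinct boundary classes of the nef cone of $W$, so making $b/a$ small pushes $L_W$ toward $\pi_Y^*L_Y$ and away from $\pi_Z^*L_Z$, and vice versa. Theorem~\ref{thm_compstable} therefore gives you the semistable-locus inclusion for at most one of the two projections, and without $W^{ss}(L_W)\subset\pi_Z^{-1}(Z^{ss})$ there is no induced morphism $W\sslash_{L_W}G\to Z\sslash_{L_Z}G$. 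Your first suggested fix, a two-parameter Hilbert--Mumford analysis, runs into the same obstruction at the level of numerical weights. Your second fix, equivariant weak factorisation plus iterated applications of Lemma~\ref{lem_kirwan}, produces \emph{two} polarisations on $W$, one adapted to each factorisation, and you are back to the problem of reconciling them.

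The paper sidesteps this entirely by working downstairs rather than upstairs. By Proposition~\ref{prop_allbir} the quotients $Y\sslash_{L_Y}G$ and $Z\sslash_{L_Z}G$ are already birational varieties with no group action in sight, so there is a single blow-up $\Bl_V(Y\sslash_{L_Y}G)$ dominating both by ordinary birational geometry. The only remaining task is to exhibit this blow-up as a GIT quotient over $X$, and that is precisely what Lemma~\ref{lem_kirwan} does: one takes $U$ to be the closure of the preimage of $V$ under the quotient map $Y^{ss}\to Y\sslash_{L_Y}G$, and then $(\Bl_U Y)\sslash_{L_Y-tE_U}G\simeq \Bl_V(Y\sslash_{L_Y}G)$ for small $t$. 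Only one application of the Kirwan--Reichstein comparison is needed, to a single morphism $\Bl_U Y\to Y$, and the simultaneity problem never arises.
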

\begin{proof}
We must show that, given two GIT quotients $Y\sslash_{L_Y}G, Z\sslash_{L_{Z}}G$ over $X$, there exists a GIT quotient $W\sslash_{L_W}G$ over $X$, together with birational morphisms $$W\sslash_{L_W}G\to Y\sslash_{L_Y}G,$$ $$ W\sslash_{L_W}G\to Z\sslash_{L_Z}G.$$
Because $Y\sslash_{L_Y}G$ and $Z\sslash_{L_Z} G$ are birational, one can find a subvariety $V\subset Y\sslash_{L_Y}G$ such that there exists a birational morphism
$$\nu_V:\Bl_V(Y\sslash_{L_Y}G)\to Z\sslash_{L_Z} G.$$
Furthermore, by definition of the blow-up, there also exists a birational morphism
$$\mu_V:\Bl_V(Y\sslash_{L_Y}G)\to Y\sslash_{L_Y} G.$$
Notice now that if we can choose $V$ to be of the form $V=U\sslash_{L_Y}G$ for some $G$-subvariety $U\subset Y$, then our claim would be proven. Indeed, by Lemma \ref{lem_kirwan} above, we would then have for $t$ small enough that
$$\Bl_V(Y\sslash_{L_Y}G)=\Bl_{U\sslash_{L_Y}G}(Y\sslash_{L_Y}G)\simeq (\Bl_U Y)\sslash_{L_Y-tE_U}G,$$
with $E_U$ the exceptional divisor of the blow-up $\mu_U:\Bl_UY\to Y$. Hence the morphisms $\mu_V$, $\nu_V$ as above would be morphisms from a GIT quotient over $X$, since $\Bl_U Y$ is birational to $X$, $L_Y-tE_U$ is ample for $t$ small enough, and admits a stable point by Theorem \ref{thm_compstable} $(ii)$ (since $Y$ admits one); this would prove the claim of the Theorem.

We now construct the desired blow-up. Let $V$ be such that there are morphisms $\mu_V$, $\nu_V$ as above. Then, let $\pi_Y:Y^{ss}\to Y\sslash_{L_Y}G$ be the quotient morphism, and let $U$ be the closure of $\pi_Y\mi(V)$ in $Y^{ss}$. It then follows that $\pi_Y(U^{ss})=V$, and the result is proven.
\end{proof}

\begin{definition}We define the \textit{universal VGIT space} $V^G\underline{X}$ to be the projective limit of the system given by GIT quotients over $X$, viewed as a locally ringed space.
\end{definition}

We note the following result, mentioned in the introduction:

\begin{proposition}\label{prop_surjmaps}For any GIT quotient $Y\sslash_{L_Y}G$ over $X$, the projection map
$$V^G\underline{X}\to Y\sslash_{L_Y}G$$
is surjective. 
\end{proposition}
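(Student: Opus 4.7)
The plan is to reduce the claim to the general fact---already invoked in the proof of Proposition \ref{prop_projlimdensity} via \cite[Chapitre 3, \S 7, Proposition 5]{bourbakiens}---that in a filtered projective system of non-empty sets with surjective transition maps, the canonical projections from the projective limit are surjective.

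The first step is to verify surjectivity of the transition maps in the projective system defining $V^G\underline{X}$. Given $W\sslash_{L_W}G \geq Y\sslash_{L_Y}G$ in the system, the corresponding morphism $\mu : W\sslash_{L_W}G \to Y\sslash_{L_Y}G$ is, by definition of the partial order, a birational morphism of projective varieties. As a morphism of projective varieties it is proper, so its image is closed in $Y\sslash_{L_Y}G$; as a birational morphism its image contains a dense open set. Thus the image is both closed and dense in the irreducible variety $Y\sslash_{L_Y}G$, so $\mu$ is surjective at the level of underlying topological spaces.

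The second step is to identify the underlying topological space of $V^G\underline{X}$---which by construction is a locally ringed space---with the projective limit of the underlying topological spaces of the constituent GIT quotients over $X$; this identification is exactly \cite[Proposition 4.1.10]{katofujiBook}, already used in Section \ref{RZspaces} to view the Zariski--Riemann space as a locally ringed space. Combined with surjectivity of the transition maps established above, the Bourbaki proposition then yields surjectivity of each projection $V^G\underline{X} \to Y\sslash_{L_Y}G$.

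The main potential obstacle is conceptual rather than technical: one must ensure that ``surjective'' in the scheme-theoretic (or locally ringed space) sense reduces to set-theoretic surjectivity for the underlying projective limit, and one must verify that the Bourbaki proposition is applicable in a setting possibly broader than the compact topological one of Proposition \ref{prop_projlimdensity}. Both of these are handled by the identification of underlying spaces above, together with the fact that filteredness of the index is built into the definition of a projective system used here (Theorem \ref{thm_uvgit}), so no further technical input is needed beyond the two steps above.
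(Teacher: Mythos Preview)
Your proof is correct and takes essentially the same approach as the paper: show that the transition maps (birational morphisms of projective varieties) are surjective, then invoke a Bourbaki inverse-limit result to conclude surjectivity of the projections. The only difference is cosmetic---the paper cites \cite[Chapitre 1, \S 9, Proposition 8]{bourbakitop14}, which identifies the image of each projection as $\bigcap_{Z\sslash_{L_Z}G\geq Y\sslash_{L_Y}G}\pi(Z\sslash_{L_Z}G)$, rather than the Ensembles reference you chose.
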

\begin{proof}
By \cite[Chapitre 1, \textsection 9, Proposition 8]{bourbakitop14}, the image of the projection map $V^G\underline{X}\to Y\sslash_{L_Y}G$ equals the intersection
$$\bigcap_{Z\sslash_{L_Z}G\geq Y\sslash_{L_Y}G}\pi(Z\sslash_{L_Z}G),$$
where $\pi$ denotes the birational morphism $Z\sslash_{L_Z}G\to Y\sslash_{L_Y}G$. But since birational morphisms are surjective, its image is therefore all of $Y\sslash_{L_Y}G$, concluding the proof.
\end{proof}

\subsection{Universality}

We next prove a cofinality result, which is important in justifying our claims concerning the birational geometry of GIT quotients. 
\begin{theorem}\label{thm_zr}
The space $V^G\underline{X}$ is isomorphic, as a locally ringed space, to the Zariski--Riemann space $\underline{Y\sslash_{L_Y} G}$, where $Y\sslash_{L_Y} G$ is any GIT quotient over $X$.
\end{theorem}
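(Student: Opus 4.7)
The plan is to exhibit a common cofinal sub-system of the two projective systems at play---the system of GIT quotients over $X$ defining $V^G\underline{X}$, and the system of birational models of $Y\sslash_{L_Y}G$ defining the Zariski--Riemann space $\underline{Y\sslash_{L_Y}G}$---and then invoke Proposition \ref{seb} in the category of locally ringed spaces, noting that projective limits of such systems of schemes exist as locally ringed spaces by \cite[Proposition 4.1.10]{katofujiBook}. Let $\mathcal{S}$ denote the collection of GIT quotients $Z\sslash_{L_Z}G$ over $X$ that come equipped with a birational morphism $Z\sslash_{L_Z}G\to Y\sslash_{L_Y}G$; each such object embeds naturally into both defining systems, and the two induced orderings coincide. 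By Proposition \ref{prop_allbir} all GIT quotients over $X$ share the function field of $Y\sslash_{L_Y}G$, so the only issue is upgrading birational equivalences to actual morphisms.

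Cofinality of $\mathcal{S}$ inside the $V^G\underline{X}$-system is immediate from the filtering property established in Theorem \ref{thm_uvgit}: given any GIT quotient $Z\sslash_{L_Z}G$ over $X$, applying that theorem to the pair $(Z\sslash_{L_Z}G,\, Y\sslash_{L_Y}G)$ produces a common dominant GIT quotient which, by construction, lies in $\mathcal{S}$.

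The heart of the argument is cofinality of $\mathcal{S}$ inside the Zariski--Riemann system. Given a birational morphism $\phi:W\to Y\sslash_{L_Y}G$, by the Raynaud--Gruson theorem (that every projective birational morphism is dominated by the blow-up of a coherent ideal sheaf), we may assume $W=\Bl_{\bar V}(Y\sslash_{L_Y}G)$ for some closed subscheme $\bar V \subset Y\sslash_{L_Y}G$. Following the construction in the proof of Theorem \ref{thm_uvgit}, set $U$ to be the closure of $\pi_Y\mi(\bar V)$ in $Y$, where $\pi_Y:Y^{ss}\to Y\sslash_{L_Y}G$ is the quotient morphism; then $U$ is automatically $G$-invariant and satisfies $\pi_Y(U^{ss})=\bar V$. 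Lemma \ref{lem_kirwan} now yields, for $t>0$ sufficiently small, an isomorphism
$$(\Bl_U Y)\sslash_{L_Y-tE_U}G \simeq \Bl_{\bar V}(Y\sslash_{L_Y}G)=W.$$
The variety $\Bl_U Y$ is $G$-equivariantly birational to $X$, the line bundle $L_Y-tE_U$ is ample for $t$ small, and Theorem \ref{thm_compstable}(ii) guarantees that its stable locus is nonempty. Hence $(\Bl_U Y)\sslash_{L_Y-tE_U}G$ is a GIT quotient over $X$, dominating both $W$ and $Y\sslash_{L_Y}G$; it therefore belongs to $\mathcal{S}$.

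With bilateral cofinality established, Proposition \ref{seb} supplies the desired isomorphism $V^G\underline{X}\simeq\underline{Y\sslash_{L_Y}G}$ as locally ringed spaces. The principal obstacle is the cofinality step on the Zariski--Riemann side: one must realise an arbitrary birational model as being dominated by the blow-up of a closed subscheme whose preimage in $Y$ is amenable to Kirwan's Lemma. The Raynaud--Gruson reduction handles the reduction to a blow-up, while the explicit preimage construction mirrors the one in Theorem \ref{thm_uvgit} and ensures the $G$-equivariance needed to invoke Lemma \ref{lem_kirwan}.
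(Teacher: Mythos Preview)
Your proposal is correct and follows essentially the same strategy as the paper: reduce to showing cofinality of GIT quotients in the Zariski--Riemann system, dominate an arbitrary birational model by a blow-up, and then invoke the construction from the proof of Theorem \ref{thm_uvgit} (via Lemma \ref{lem_kirwan} and Theorem \ref{thm_compstable}) to realise that blow-up as a GIT quotient over $X$. Your introduction of the intermediate system $\mathcal{S}$ and the explicit appeal to a flattening/blow-up theorem make the bilateral cofinality more transparent than the paper's own terse treatment, but the substance is the same.
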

\begin{proof}
By Proposition \ref{seb} it suffices to show that the projective system given by GIT quotients over $X$ is cofinal as a projective sub-system of the projective system given by all birational models of $X\sslash_{L_X} G$. In other words, we must show that, if $Y\sslash_{L_Y}G$ is a GIT quotient over $X$, and $Z$ admits a birational morphism to $Y\sslash_{L_Y}G$, then there exists a GIT quotient over $X$ of the form $W\sslash_{L_W} G$, with a birational morphism to $Z$. From the proof of Theorem \ref{thm_uvgit}, it suffices to find a subvariety $V$ of $Y\sslash_{L_Y}G$ such that there exists a morphism
$$\Bl_V(Y\sslash_{L_Y}G)\to Z;$$ taking any such $V$ concludes the proof of the result. \end{proof}

This result implies that the universal VGIT space $V^G\underline{X}$ \emph{completely determines} the birational equivalance class of $X\sslash_{L_X} G$, since by Proposition \ref{birationalRZ} the Zariski--Riemann space of  $X\sslash_{L_X} G$ itself does; as explained in the introduction, this is the key advantage of our approach over traditional VGIT.

\section{The infinitesimally stable locus}\label{sect_compact}

Motivated by the interpretation of GIT quotients $X\sslash_L G$ as providing a compactification of the quotient of the stable locus $X^s$, we next provide an analogous description for $V^G \underline X$. We work in the analytic topology and hence take the analytification $X\an$ of $X$; since analytifications of GIT quotients over $X$ form a projective system, we can also take the projective limit 
$$V^G\underline{X}\an:=\varprojlim (Y\sslash_{L_Y}G)\an,$$ which is a compact Hausdorff space by the general theory of projective limits of topological spaces.

Let us begin with the following definition:
\begin{definition}We say that a point $\underline x\in V^G \underline X\an$ is \textit{infinitesimally stable} over $X$ if for all GIT quotients $Y\sslash_{L_Y}G$ over $X$, there is a GIT quotient $Z\sslash_{L_Z} G$ over $X$ dominating $Y\sslash_{L_Y}G$ such that the realisation of $\underline x$ in $(Z\sslash_{L_Z} G)\an$ is stable.  \end{definition}

We recall that the realisation of a point $\underline x \in V^G \underline X\an$ is the image of $\underline x$ under the projection map
$$\pi_{L_Y}:V^G \underline X\an\to (Y\sslash_{L_Y} G)\an.$$ Thus the condition that $\underline x$ be infinitesimally stable asks that the collection of GIT quotients for which the realisation of $\underline x$ is stable is cofinal in the projective system.

An interpretation of infinitesimal stability is as follows: while an infinitesimally stable point $\underline x$ may not come from a point that is stable on $X$, it becomes stable after enough blow-ups. In that sense, $\underline x$  becomes stable once we unravel enough infinitesimal data associated to it (where infinitesimal is meant in the same sense that blow-ups include ``infinitesimal directions'').

\begin{definition}
The set of all infinitesimally stable points over $X$ will be called the \textit{infinitesimally stable locus}
$$X^{is}\sslash G\subset V^G\underline X\an.$$
(Note that we have not actually defined $X^{is}$ itself, despite our notation.)
\end{definition}

\begin{proposition}\label{coro_bij}There is a natural bijection
$$X^{is}\sslash G\simeq \bigcup_{Y\sslash_{L_Y} G}\pi_{L_Y}\mi((Y^s\sslash_{L_Y}G)\an).$$
That is, the infinitesimally stable locus is exactly the union of all preimages of stable loci of GIT quotients over $X$ by the projection maps of the projective limit $V^G \underline X$.
\end{proposition}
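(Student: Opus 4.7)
The plan is to prove the claimed equality by establishing both set-theoretic inclusions.

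The containment $X^{is}\sslash G \subset \bigcup_{Y\sslash_{L_Y}G} \pi_{L_Y}\mi((Y^s\sslash_{L_Y}G)\an)$ is essentially tautological: given $\underline x \in X^{is}\sslash G$, applying the infinitesimal stability condition to any fixed GIT quotient $Y\sslash_{L_Y}G$ over $X$ produces a dominating $Z\sslash_{L_Z}G$ over $X$ on which $\underline x$ realises to a stable point, placing $\underline x$ in the term $\pi_{L_Z}\mi((Z^s\sslash_{L_Z}G)\an)$ of the union.

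For the reverse inclusion, suppose $\underline x \in \pi_{L_Y}\mi((Y^s\sslash_{L_Y}G)\an)$ for some GIT quotient $Y\sslash_{L_Y}G$ over $X$. To verify infinitesimal stability of $\underline x$, I fix an arbitrary $Z\sslash_{L_Z}G$ over $X$ and must produce a dominating GIT quotient over $X$ on which the realisation of $\underline x$ is stable. Following the recipe in the proof of Theorem \ref{thm_uvgit}, I would find a subvariety $V \subset Y\sslash_{L_Y}G$ such that $\Bl_V(Y\sslash_{L_Y}G)$ maps birationally to $Z\sslash_{L_Z}G$, and use Lemma \ref{lem_kirwan} to identify this blow-up with $(\Bl_U Y)\sslash_{L_Y-tE_U}G$ for suitable $U\subset Y$ and small $t>0$. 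The resulting $W\sslash_{L_W}G$ is a GIT quotient over $X$ dominating both $Y\sslash_{L_Y}G$ and $Z\sslash_{L_Z}G$.

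The key remaining step, which I expect to be the main obstacle, is to show that the realisation of $\underline x$ on $W\sslash_{L_W}G$ is stable. I would chase this through the commutative square formed by the quotient morphisms $\pi_W:W^{ss}\to W\sslash_{L_W}G$ and $\pi_Y:Y^{ss}\to Y\sslash_{L_Y}G$ together with the blow-up $\mu:W\to Y$ and its descent $\tilde\mu:W\sslash_{L_W}G\to Y\sslash_{L_Y}G$. Writing $w:=\pi_{L_W}(\underline x)$, so that $\tilde\mu(w)=\pi_{L_Y}(\underline x)$ is stable, pick any lift $\tilde w\in W^{ss}$ of $w$ under $\pi_W$. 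Then $\mu(\tilde w)$ lies in the $\pi_Y$-fibre of a stable point; since the GIT quotient restricts to a geometric quotient on the stable locus, this fibre is a single $G$-orbit contained in $Y^s$, so $\mu(\tilde w)\in Y^s$. Theorem \ref{thm_compstable}(ii) (applicable by our choice of $t$ via Example \ref{ex_compstable}) then forces $\tilde w\in W^s$, hence $w\in W^s\sslash_{L_W}G$. This certifies that the realisation at $W\sslash_{L_W}G$ is stable, and since $Z\sslash_{L_Z}G$ was arbitrary, $\underline x$ is infinitesimally stable, completing the proof.
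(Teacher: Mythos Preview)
Your proof is correct and follows essentially the same strategy as the paper: both directions rely on Theorem~\ref{thm_compstable}(ii) together with the cofinality of blow-up models established in the proof of Theorem~\ref{thm_uvgit}. Your diagram chase (lifting $w$ to $\tilde w\in W^{ss}$, using that the $\pi_Y$-fibre over a stable point lies entirely in $Y^s$, and then applying $\mu^{-1}(Y^s)\subset W^s$) spells out a step that the paper leaves implicit, but the underlying argument is the same.
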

\begin{proof}
We first claim that any point $\underline x$ with $\pi_{L_Y}(\underline x)\in Y^s\sslash_{L_Y}G$ belongs to $X^{is}\sslash G$. Indeed, by Theorem \ref{thm_compstable} $(ii)$ it follows that for any $G$-equivariant blow-up $(\Bl_V Y)\sslash_{L-tE_V} G$ there is an inclusion $\pi^{-1}(Y^s) \subset (Bl_V Y)^s$ for all $t$ sufficiently small (where the $t$-dependence of the polarisation and hence the stable locus is implicit in the notation and $E_V$ is the exceptional divisor). It follows that $\pi_{L_Y}(\underline x)\in (Bl_V Y)^s\sslash_{L_Y}G$, and since such blowups are cofinal in the projective system of GIT quotients by the proof of Theorem \ref{thm_uvgit}, it follows that $\underline x$ is infinitesimally stable.

Conversely, if $\underline x$ is infinitesimally stable, then there exists a GIT quotient $Y\sslash_{L_Y} G$ with $\pi_Y(\underline x)\in Y^s\sslash_{L_Y}G$, proving the reverse inclusion.
\end{proof}

We are now equipped to prove Theorem B:
\begin{theorem}\label{thm_compact}

The set of infinitesimally stable points $X^{is}\sslash G$ is a dense, open subset of the compact Hausdorff topological space $ V^G\underline X$.  Thus the space $V^G\underline{X}\an$ is a compactification of the open subset $X^{is}\sslash G$ of infinitesimally stable points over $X$.  \end{theorem}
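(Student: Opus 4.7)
The plan is to combine the explicit description of $X^{is}\sslash G$ furnished by Proposition \ref{coro_bij} with the general density statement for projective limits recorded in Proposition \ref{prop_projlimdensity}. First, $V^G\underline{X}\an$ is compact Hausdorff: each analytification $(Y\sslash_{L_Y} G)\an$ is compact Hausdorff (it is the analytification of a complex projective variety), and projective limits of compact Hausdorff spaces are compact Hausdorff. The transition maps are analytifications of the birational morphisms $Z\sslash_{L_Z}G \to Y\sslash_{L_Y}G$ produced in the proof of Theorem \ref{thm_uvgit}; these are surjective (being birational morphisms of irreducible projective varieties), continuous and, by properness, remain surjective after analytification.

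For openness, Proposition \ref{coro_bij} gives
\[
X^{is}\sslash G \;=\; \bigcup_{Y\sslash_{L_Y} G} \pi_{L_Y}\mi\bigl((Y^s\sslash_{L_Y}G)\an\bigr).
\]
Since the stable locus $Y^s$ is Zariski open in $Y$ and saturated for the $G$-action on $Y^{ss}$, its image $Y^s\sslash_{L_Y}G$ is Zariski open in $Y\sslash_{L_Y}G$; hence its analytification is open in $(Y\sslash_{L_Y}G)\an$. Continuity of $\pi_{L_Y}$ then makes each term in the union open, and the union itself is open.

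For density, it suffices to prove that for some single GIT quotient $Y\sslash_{L_Y}G$ over $X$, the preimage $\pi_{L_Y}\mi((Y^s\sslash_{L_Y}G)\an)$ is already dense in $V^G\underline{X}\an$, since this set is contained in $X^{is}\sslash G$. The variety $Y\sslash_{L_Y}G$ is irreducible and the stable locus $Y^s\sslash_{L_Y}G$ is a nonempty Zariski open subset (by the running stability assumption built into the definition of a GIT quotient over $X$), hence Zariski dense. Consequently its analytification is dense in $(Y\sslash_{L_Y}G)\an$ for the complex topology. Applying Proposition \ref{prop_projlimdensity} to the projective system of analytifications---whose hypotheses (surjectivity and continuity of transition maps, compactness of terms) we checked above---yields density of $\pi_{L_Y}\mi((Y^s\sslash_{L_Y}G)\an)$ in $V^G\underline{X}\an$, and the final assertion of the theorem is the tautological consequence that a compact Hausdorff space containing a dense open subset is a compactification thereof.

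The only delicate point is verifying that the analytic projective system genuinely satisfies the hypotheses of Proposition \ref{prop_projlimdensity}: surjectivity of analytifications of birational morphisms between possibly singular projective varieties follows from properness, and the fact that Zariski density transfers to analytic density on irreducible complex varieties is classical. Beyond this, the argument is a direct assembly of Propositions \ref{coro_bij} and \ref{prop_projlimdensity}.
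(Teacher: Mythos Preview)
Your proof is correct and follows essentially the same route as the paper: both combine Proposition~\ref{coro_bij} with Proposition~\ref{prop_projlimdensity}, deducing openness from continuity of the projections and density from the fact that each $(Y^s\sslash_{L_Y}G)\an$ is dense open in $(Y\sslash_{L_Y}G)\an$, with compactness coming from the general theory of projective limits. Your version is slightly more explicit in checking the hypotheses of Proposition~\ref{prop_projlimdensity} (surjectivity of analytified transition maps, analytic density from Zariski density), and your observation that a single preimage already suffices for density is a minor streamlining, but there is no substantive difference in approach.
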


\begin{proof}

For a fixed GIT quotient $Y\sslash_{L_Y}$ over $X$, the stable locus $Y^s\sslash_{L_Y}G$ is a Zariski-dense open subset of $Y\sslash_{L_Y}G$. By GAGA, the image $(Y^s\sslash_{L_Y}G)\an$ of $Y^s\sslash_{L_Y}G$, under the analytification map sending $\bbc$-points of $X$ to the Serre analytification $X\an$, is similarly dense and open in $X\an$. 

We first prove openness of $X^{is}\sslash G$ in $V^G \underline X\an$. By Proposition \ref{coro_bij}, the set of infinitesimally stable points over $X$ is exactly given by the union of the sets
$$(\pi_{L_Y}\an)\mi (Y^s\sslash_{L_Y}G)\an$$
for all GIT quotients over $X$, where $\pi_{L_Y}\an$ is the analytification of the projection map $\pi_{L_Y}:V^G\underline X\to Y\sslash_{L_Y}G$. These projection maps are continuous, so each set $(\pi_{L_Y}\an)\mi (Y^s\sslash_{L_Y}G)\an$ is open, which implies that their union $$X^{is}\sslash G\simeq \bigcup_{Y\sslash_{L_Y} G}\pi_{L_Y}\mi((Y^s\sslash_{L_Y}G)\an)$$ is also an open subset of $V^G\underline X$.

We next prove density.  Since analytifications of GIT quotients are compact (again by GAGA), Proposition \ref{prop_projlimdensity} shows each such preimage defines a dense open set in $V^G\underline X\an$, hence so does their union. In other words, $(X^{is}\sslash G)\an$ embeds densely into $V^G\underline X\an$. 

Finally, it is a general fact in the theory of projective limits of topological spaces that $V^G\underline{X}\an$ is compact \cite[Chapitre 1, \textsection 9, Proposition 8]{bourbakitop14}, so the inclusion $$(X^{is}\sslash G)\an \subset  V^G\underline{X}\an$$ is an inclusion of a dense open subset into a compact space, meaning  $V^G\underline X\an$ can truly be viewed as a compactification and concluding the proof. 
\end{proof}

Yet another way to state this result is, \textit{via} Theorem \ref{thm_zr}, that the Zariski--Riemann space of any GIT quotient over (or of) $X$ is a compactification of the set of infinitesimally stable points over $X$.
\bibliographystyle{alpha}
\bibliography{bib}

\begin{thebibliography}{{Hos}15}

\bibitem[BJ22]{bjtrivval}
S\'{e}bastien Boucksom and Mattias Jonsson.
\newblock Global pluripotential theory over a trivially valued field.
\newblock {\em Ann. Fac. Sci. Toulouse Math. (6)}, 31(3):647--836, 2022.

\bibitem[BO02]{BO}
A.~Bondal and D.~Orlov.
\newblock Derived categories of coherent sheaves.
\newblock In {\em Proceedings of the {I}nternational {C}ongress of
  {M}athematicians, {V}ol. {II} ({B}eijing, 2002)}, pages 47--56. Higher Ed.
  Press, Beijing, 2002.

\bibitem[Bou06]{bourbakiens}
Nicolas Bourbaki.
\newblock {\em Th{\'e}orie des ensembles}.
\newblock Springer, 2006.

\bibitem[Bou07a]{bourbakicommalg}
Nicolas Bourbaki.
\newblock {\em Alg{\`e}bre commutative: {C}hapitres 5 {\`a} 7}.
\newblock Springer Science \& Business Media, 2007.

\bibitem[Bou07b]{bourbakitop14}
Nicolas Bourbaki.
\newblock {\em Topologie g{\'e}n{\'e}rale: Chapitres 1 {\`a} 4}, volume~3.
\newblock Springer Science \& Business Media, 2007.

\bibitem[Bri07]{TB}
Tom Bridgeland.
\newblock Stability conditions on triangulated categories.
\newblock {\em Ann. of Math. (2)}, 166(2):317--345, 2007.

\bibitem[DF21]{dangfavrespectral}
Nguyen-Bac Dang and Charles Favre.
\newblock Spectral interpretations of dynamical degrees and applications.
\newblock {\em Annals of Mathematics}, 194(1):299--359, 2021.

\bibitem[DH98]{vgit1}
Igor~V Dolgachev and Yi~Hu.
\newblock Variation of geometric invariant theory quotients.
\newblock {\em Publications Math{\'e}matiques de l'Institut des Hautes
  {\'E}tudes Scientifiques}, 87(1):5--51, 1998.

\bibitem[DR22]{bigding}
Ruadha{\'\i} Dervan and R{\'e}mi Reboulet.
\newblock Ding stability and {K}ähler-{E}instein metrics on manifolds with big
  anticanonical class.
\newblock {\em arXiv preprint arXiv:2209.08952}, 2022.

\bibitem[FK18]{katofujiBook}
Kazuhiro Fujiwara and Fumiharu Kato.
\newblock {\em Foundations of rigid geometry}.
\newblock European Mathematical Society, 2018.

\bibitem[Har13]{hartshorne}
Robin Hartshorne.
\newblock {\em Algebraic geometry}, volume~52.
\newblock Springer Science \& Business Media, 2013.

\bibitem[{Hos}15]{hoskins}
Victoria {Hoskins}.
\newblock {Moduli problems and geometric invariant theory}.
\newblock Notes available online, 2015.

\bibitem[Hu96]{yihurelgit}
Yi~Hu.
\newblock Relative geometric invariant theory and universal moduli spaces.
\newblock {\em International Journal of Mathematics}, 7(02):151--181, 1996.

\bibitem[Hu05]{yihuchow}
Yi~Hu.
\newblock Topological aspects of {C}how quotients.
\newblock {\em Journal of Differential Geometry}, 69(3):399--440, 2005.

\bibitem[Kap93]{kapranov}
M.~M. Kapranov.
\newblock Chow quotients of {G}rassmannians. {I}.
\newblock In {\em I. {M}. {G}elfand {S}eminar}, volume~16 of {\em Adv. Soviet
  Math.}, pages 29--110. Amer. Math. Soc., Providence, RI, 1993.

\bibitem[Kir84]{kirwan-thesis}
Frances~Clare Kirwan.
\newblock {\em Cohomology of quotients in symplectic and algebraic geometry},
  volume~31 of {\em Mathematical Notes}.
\newblock Princeton University Press, Princeton, NJ, 1984.

\bibitem[Kir85]{kirwandesing}
Frances~Clare Kirwan.
\newblock Partial desingularisations of quotients of nonsingular varieties and
  their {B}etti numbers.
\newblock {\em Annals of mathematics}, 122(1):41--85, 1985.

\bibitem[KN79]{kempf-ness}
George Kempf and Linda Ness.
\newblock The length of vectors in representation spaces.
\newblock In {\em Algebraic geometry ({P}roc. {S}ummer {M}eeting, {U}niv.
  {C}openhagen, {C}openhagen, 1978)}, volume 732 of {\em Lecture Notes in
  Math.}, pages 233--243. Springer, Berlin, 1979.

\bibitem[KS10]{KS}
Maxim Kontsevich and Yan Soibelman.
\newblock Motivic {D}onaldson-{T}homas invariants: summary of results.
\newblock In {\em Mirror symmetry and tropical geometry}, volume 527 of {\em
  Contemp. Math.}, pages 55--89. Amer. Math. Soc., Providence, RI, 2010.

\bibitem[KS15]{localisation}
Bruno Kahn and R.~Sujatha.
\newblock Birational geometry and localisation of categories.
\newblock {\em Doc. Math.}, (Extra vol.: Alexander S. Merkurjev's sixtieth
  birthday):277--334, 2015.
\newblock With appendices by Jean-Louis Colliot-Th\'{e}l\`ene and Ofer Gabber.

\bibitem[KSZ91]{ksz}
Mikhail~M Kapranov, Bernd Sturmfels, and Andrei~V Zelevinsky.
\newblock Quotients of toric varieties.
\newblock {\em Mathematische Annalen}, 290(1):643--655, 1991.

\bibitem[MFK94]{gitbook}
David Mumford, John Fogarty, and Frances Kirwan.
\newblock {\em Geometric invariant theory}, volume~34.
\newblock Springer Science \& Business Media, 1994.

\bibitem[Muk03]{mukai}
Shigeru Mukai.
\newblock {\em An introduction to invariants and moduli}, volume~81 of {\em
  Cambridge Studies in Advanced Mathematics}.
\newblock Cambridge University Press, Cambridge, japanese edition, 2003.

\bibitem[Olb21]{olberding}
Bruce Olberding.
\newblock The {Z}ariski-{R}iemann space of valuation rings.
\newblock In {\em Commutative algebra}, pages 639--667. Springer, Cham, [2021]
  \copyright 2021.

\bibitem[Rei89]{reichstein}
Zinovy Reichstein.
\newblock Stability and equivariant maps.
\newblock {\em Inventiones mathematicae}, 96(2):349--383, 1989.

\bibitem[Res00]{ressayre}
Nicolas Ressayre.
\newblock The {GIT}-equivalence for {{\(G\)}}-line bundles.
\newblock {\em Geom. Dedicata}, 81(1-3):295--324, 2000.

\bibitem[Sho93]{shokurovflips}
Vyacheslav~V Shokurov.
\newblock 3-fold log flips.
\newblock {\em Izvestiya: Mathematics}, 40(1):95, 1993.

\bibitem[Tha96]{vgit2}
Michael Thaddeus.
\newblock Geometric invariant theory and flips.
\newblock {\em Journal of the American Mathematical Society}, 9(3):691--723,
  1996.

\bibitem[Tod08]{toda}
Yukinobu Toda.
\newblock Moduli stacks and invariants of semistable objects on {$K3$}
  surfaces.
\newblock {\em Adv. Math.}, 217(6):2736--2781, 2008.

\bibitem[Tru24]{trusiani-ytd}
Antonio Trusiani.
\newblock A relative {Y}au-{T}ian-{D}onaldson conjecture and stability
  thresholds.
\newblock {\em Advances in Mathematics}, 441:109537, 2024.

\bibitem[Vaq00]{vaquie}
Michel Vaqui{\'e}.
\newblock Valuations.
\newblock {\em Resolution of Singularities: A research textbook in tribute to
  Oscar Zariski Based on the courses given at the Working Week in Obergurgl,
  Austria, September 7--14, 1997}, pages 539--590, 2000.

\bibitem[Zar39]{zariskisurf}
Oscar Zariski.
\newblock The reduction of the singularities of an algebraic surface.
\newblock {\em Annals of Mathematics}, pages 639--689, 1939.

\end{thebibliography}

\end{document}